\documentclass{article}
\usepackage{amsmath,amssymb,amsthm,authblk,enumerate,graphicx,epstopdf}
\usepackage[a4paper, margin=2.5cm]{geometry}
\newtheorem{theorem}{Theorem}

\newtheorem{corollary}[theorem]{Corollary}
\newtheorem{proposition}[theorem]{Proposition}
\numberwithin{equation}{section}
\numberwithin{theorem}{section}
\begin{document}

\markboth{ISTV\'AN BAL\'AZS \& GERGELY R\"OST}{HOPF BIFURCATION AND PERIOD FUNCTIONS FOR WRIGHT-TYPE DELAY DIFFERENTIAL EQUATIONS}

\title{HOPF BIFURCATION AND PERIOD FUNCTIONS FOR WRIGHT-TYPE DELAY DIFFERENTIAL EQUATIONS }

\author{ISTV\'AN BAL\'AZS }

\affil{MTA-SZTE Analysis and Stochastics Research Group, Bolyai Institute, University of Szeged,\\
Aradi v\'ertan\'uk tere 1, Szeged, H-6720, Hungary\\
balazsi@math.u-szeged.hu}

\author{GERGELY R\"OST}
\affil{Mathematical Institute, University of Oxford,\\
Woodstock Road, OX2 6GG, Oxford, United Kingdom}
\affil{Bolyai Institute, University of Szeged,\\
	Aradi v\'ertan\'uk tere 1, Szeged, H-6720, Hungary\\
	rost@math.u-szeged.hu}
\maketitle

\begin{abstract}
We present the simplest criterion that determines the direction of the Hopf bifurcations of the delay differential equation $x'(t)=-\mu f(x(t-1))$, as the parameter $\mu$ passes through the critical values $\mu_k$. We give a complete classification of the possible bifurcation sequences. Using this information and the Cooke-transformation, we obtain local estimates and monotonicity properties of the periods of the bifurcating limit cycles along the Hopf-branches. Further, we show how our results relate to the often required property that the nonlinearity has negative Schwarzian derivative.
\end{abstract}

{\it Keywords:} Hopf bifurcation, supercritical, normal form, delay differential equation, period estimates

\section{Introduction}
\noindent The appearance of limit cycles around equilibria via Hopf bifurcations is a common phenomenon for delay differential equations, when a parameter of the equation is passing through a critical value and a pair of eigenvalues of the linearized system is crossing the imaginary axis on the complex plane. Depending on the nature of the nonlinearity, the Hopf bifurcation can be either supercritical or subcritical, i.e. the bifurcating periodic solution can be stable or unstable on the center manifold. It is well known how to determine the direction of the Hopf bifurcation for delay differential equations at least since the book of Hassard, Kazarinoff and Wan \cite{HKW}. One can use bilinear forms, center manifold reduction (see \cite{DvGVLW,HKW}), Lyapunov-Schmidt method \cite{WuGuo} or alternatively, the theory of normal forms for functional differential equations \cite{FM}. Based on these fundamental techniques, the literature of delay differential equations is vast by papers where Hopf bifurcation results are shown to many particular model systems arising from physical, engineering or biological applications. However, most of those articles provide only the complicated formula of the first Lyapunov coefficient, which is hard to relate to the original model parameters, and in fact if the reader wants to figure out the direction of the bifurcation in particular cases, it requires almost the same effort as repeating the whole calculation of the general formula. Also, due to the elaborative calculations, the literature of bifurcation theory is not free of minor mistakes or inaccuracies (some of those are discussed, for example, in \cite{LW} or \cite{Rost}).

To remedy this situation, our first aim here is to derive the simplest criterion for
the direction of the bifurcations for the class of scalar delay differential equations of the special form $$x'(t)=-\mu f(x(t-1)),$$ which will be then trivial to check in any specific situation.  Note that the equation $$z'(s)=-f(z(s-\mu))$$ can be easily rescaled into the previous one by the change of variables $s=t\mu$ and $x(t)=z(s)$, hence in the sequel we assume that the delay is one and $\mu$ will be our bifurcation parameter. This class of equations is frequently studied and includes such notorious examples as Wright's equation or the Ikeda-equation. We present concrete examples for all possible sequences of subcritical and supercritical Hopf bifurcations. Our calculations are based on the method of \cite{FM}.

Next, we use the formulae for the directions of the Hopf bifurcations and combine with the method of Cooke's transformation to obtain some information on the periods of the bifurcating solutions. In particular, we find narrow estimates of the period function along branches, and explore the relation between its monotonicity and the directions of the bifurcations. Finally, we explore the connection between our results and the Schwarzian derivative of the nonlinearity, which plays a significant role in many global stability results, and show that by local bifurcations one can not disprove the conjecture that local asymptotic stability implies global asymptotic stability for Wright-type delay differential equations with negative Schwarzian.

\section{Direction of Hopf bifurcation}
Consider the scalar delay differential equation
\begin{equation}x'(t)=-\mu f(x(t-1))=:g(x_t,\mu),\label{eq:1}\end{equation}
where $\mu \in \mathbb{R}$, $f$ is an $\mathbb{R}\to \mathbb{R}$, $C^3$-smooth function with $f(0)=0$ so it can be written as $$f(\xi)=\xi+B\xi^2+C\xi^3+h.o.t.,$$
where $B=f''(0)/2$ and $C=f'''(0)/6$. Note that $f'(0)=1$ can be assumed without the loss of generality, as we can normalize it via the parameter $\mu$. 
It is known that the direction of the Hopf bifurcation depends on the terms of the Taylor-series of the nonlinearity up to order three. In this case, the direction of the Hopf bifurcation around the zero equilibrium is determined by a relation between the coefficients $B$ and $C$. To our surprise, despite the method is well known for a much more general class of equations, we could not find a derivation of such a simple, readily available criterion for $B$ and $C$ in the literature for \eqref{eq:1}, only for the first Hopf bifurcation  in \cite{Stech} and in Chapter 6 of the recent book of H. L. Smith \cite{S}, and for a different class of equations in \cite{GZ}. The main result of this Section is the general condition for the stability of the Hopf bifurcation at any critical parameter value.
\begin{theorem}
\begin{enumerate}[a)]
\item Equation \eqref{eq:1} has Hopf bifurcations from the zero equilibrium at the critical parameter values $\mu_k=\frac{\pi}{2}+2k\pi,\ k\in\mathbb{Z}$.
\item The {\rm k}th bifurcation is
\begin{itemize}
\item supercritical if $C<\frac{22(4k+1)\pi-8}{15(4k+1)\pi}B^2$;
\item subcritical if $C>\frac{22(4k+1)\pi-8}{15(4k+1)\pi}B^2$.
\end{itemize}
\item If a Hopf bifurcation of Equation \eqref{eq:1} is
\begin{itemize}
\item supercritical, then the bifurcation branch starts to right if $\mu_k>0$ and left if $\mu_k<0$;
\item subcritical, then the bifurcation branch starts to left if $\mu_k>0$ and right if $\mu_k<0$.
\end{itemize}
\end{enumerate}
\label{th:1}
\end{theorem}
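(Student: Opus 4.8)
The plan is to handle the three parts in turn, with essentially all of the real work in~(b). For (a), I linearize \eqref{eq:1} at the zero equilibrium, obtaining $x'(t)=-\mu x(t-1)$ with characteristic equation $\Delta(\lambda,\mu):=\lambda+\mu e^{-\lambda}=0$. Searching for roots $\lambda=i\omega$ with $\omega>0$ and separating real and imaginary parts forces $\cos\omega=0$ together with $\omega=\mu\sin\omega$; running through the two cases $\sin\omega=\pm1$ one finds that the admissible pairs are $(\mu,\omega)=(\pi/2+2k\pi,\ \pi/2+2k\pi)$ for $k\ge0$ and $(\mu,\omega)=(\pi/2+2k\pi,\ -(\pi/2+2k\pi))$ for $k\le -1$, so that the critical set is exactly $\{\mu_k=\pi/2+2k\pi:k\in\mathbb{Z}\}$ and the crossing frequency is $\omega_k:=|\mu_k|$. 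Implicit differentiation of $\Delta=0$ yields $\lambda'(\mu)=-e^{-\lambda}/(1-\mu e^{-\lambda})$, and using the characteristic equation in the form $\mu_k e^{-i\omega_k}=-i\omega_k$ one gets $\lambda'(\mu_k)=(\omega_k^2+i\omega_k)/\bigl(\mu_k(1+\omega_k^2)\bigr)$, hence $\operatorname{Re}\lambda'(\mu_k)=\mu_k/(1+\mu_k^2)\neq0$. Since $\pm i\omega_k$ are simple roots of $\Delta(\cdot,\mu_k)$ and the only roots on the imaginary axis (a routine check for this characteristic function), the Hopf bifurcation theorem for retarded functional differential equations applies and gives~(a).

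For (b) I run the local center-manifold reduction / normal-form computation of \cite{FM} (equivalently, the scheme of \cite{HKW}) at $\mu=\mu_k$ on the phase space $C([-1,0],\mathbb{R})$. The eigenfunction for $i\omega_k$ is $\phi(\theta)=e^{i\omega_k\theta}$ and the adjoint eigenfunction is $\psi(s)=De^{i\omega_k s}$, with $D$ fixed by the bilinear-form normalization $\langle\psi,\phi\rangle=1$; everything is driven by $\phi(0)=1$, $\phi(-1)^2=e^{-2i\omega_k}=-1$ and $\mu_k e^{-i\omega_k}=-i\omega_k$, which reduce the normalization to $\bar D(1+i\omega_k)=1$, i.e.\ $D=(1-i\omega_k)^{-1}$. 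Substituting $x_t=z\phi+\bar z\bar\phi+W(z,\bar z)$ and $x(t-1)=\phi(-1)z+\bar\phi(-1)\bar z+W(z,\bar z)(-1)$ into $g(x_t,\mu_k)=-\mu_k x(t-1)-\mu_k Bx(t-1)^2-\mu_k Cx(t-1)^3+\text{h.o.t.}$, one reads off the quadratic coefficients $g_{20},g_{11},g_{02}$ (all proportional to $\mu_k B$, because $\phi(-1)^2=-1$), solves the two homological equations for the quadratic part $W_{20}(\theta),W_{11}(\theta)$ of the center manifold --- including the constant terms, which involve $\Delta(2i\omega_k)=2i\omega_k-\mu_k$ and $\Delta(0)=\mu_k$ --- and evaluates them at $\theta=-1$. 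The cubic coefficient $g_{21}$ then splits into a direct contribution from the $z^2\bar z$ part of $x(t-1)^3$, which is the $C$-term, plus a contribution from the cross terms of $x(t-1)^2$ with $W(z,\bar z)(-1)$, which --- through $W_{20}(-1),W_{11}(-1)$ and $D$ --- produces the $B^2$-term. Assembling
\[
\operatorname{Re}c_1=\operatorname{Re}\!\left(\frac{i}{2\omega_k}\Bigl(g_{20}g_{11}-2|g_{11}|^2-\tfrac13|g_{02}|^2\Bigr)+\frac{g_{21}}{2}\right),
\]
the $(1+\omega_k^2)^{-2}$-terms cancel, and after re-expressing $\omega_k=|\mu_k|$ through $\mu_k$ the auxiliary sign $\operatorname{sign}(\mu_k)$ drops out as well, leaving
\[
\operatorname{Re}c_1=\frac{\mu_k}{5(1+\mu_k^2)}\bigl(15\mu_k C-(22\mu_k-4)B^2\bigr).
\]
Since the bifurcation is supercritical exactly when $\operatorname{Re}c_1<0$, and since $5(1+\mu_k^2)>0$ makes $\operatorname{Re}c_1<0$ equivalent to $15\mu_k^2C<\mu_k(22\mu_k-4)B^2$, dividing by $15\mu_k^2>0$ gives $C<\frac{22\mu_k-4}{15\mu_k}B^2$ for every $k$; substituting $\mu_k=(4k+1)\pi/2$ turns this into the stated criterion, which is~(b).

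For (c) I use the fact that, in the Hopf theorem, the side of $\mu_k$ on which the local branch of periodic orbits exists is governed by $\operatorname{sign}(\mu_2)=-\operatorname{sign}(\operatorname{Re}c_1)\,\operatorname{sign}(\operatorname{Re}\lambda'(\mu_k))$, with $\mu_2>0$ meaning the branch goes to the right. By (a), $\operatorname{Re}\lambda'(\mu_k)$ has the sign of $\mu_k$; and by the definition adopted above, a supercritical bifurcation is one with $\operatorname{Re}c_1<0$ and a subcritical one has $\operatorname{Re}c_1>0$. Multiplying the signs yields all four cases: supercritical with $\mu_k>0$ $\Rightarrow$ branch to the right, supercritical with $\mu_k<0$ $\Rightarrow$ branch to the left, subcritical with $\mu_k>0$ $\Rightarrow$ branch to the left, subcritical with $\mu_k<0$ $\Rightarrow$ branch to the right, which is~(c).

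The main obstacle is the bookkeeping in part~(b): one must commit to a single internally consistent set of conventions for the bilinear form, for the $1/(j!\,\ell!)$ normalizations of the Taylor coefficients, and for the homological equations defining $W_{20}$ and $W_{11}$, and then carry several complex-number computations far enough that both the $(1+\omega_k^2)^{-2}$ contributions and the auxiliary sign $\operatorname{sign}(\mu_k)$ cancel out of $\operatorname{Re}c_1$. Those cancellations, together with the known supercriticality of Wright's equation at $\mu_0=\pi/2$ (where $B=1/2$, $C=1/6$), provide useful consistency checks on the computation.
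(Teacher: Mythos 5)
Your proposal is correct, and at the level of strategy it mirrors the paper: linearize and solve $\cos\omega=0$, $\omega=\mu\sin\omega$ for (a); compute a first Lyapunov quantity at $\mu_k$ for (b); and combine its sign with the crossing direction $\operatorname{Re}\lambda'(\mu_k)=\mu_k/(1+\mu_k^2)$ for (c) — the paper obtains this same transversality value by differentiating the real system $\alpha=-\mu e^{-\alpha}\cos\omega$, $\omega=\mu e^{-\alpha}\sin\omega$. The substantive difference is in (b): the paper plugs the coefficients $B_{(2,0,0,0)},B_{(1,1,0,0)},B_{(1,0,1,0)},B_{(0,1,0,1)},B_{(2,1,0,0)}$ into the ready-made Faria--Magalh\~aes normal-form expression $K$ (their formula (3.18) and Theorem 3.20) and only tracks its sign, treating all $k\in\mathbb{Z}$ uniformly via $\mu_k=\omega_k=\tfrac{4k+1}{2}\pi$ and $e^{i\omega_k}=i$ (a ``negative frequency'' for $k<0$ being harmless), whereas you run the Hassard--Kazarinoff--Wan center-manifold projection, solve the homological equations for $W_{20},W_{11}$, and produce the explicit coefficient $\operatorname{Re}c_1=\frac{\mu_k}{5(1+\mu_k^2)}\bigl(15\mu_k C-(22\mu_k-4)B^2\bigr)$; I checked this formula and it is exactly right, including your two claimed cancellations (the $(1+\omega_k^2)^{-2}$ terms and the sign $\operatorname{sign}(\mu_k)$ coming from your convention $\omega_k=|\mu_k|>0$), and dividing by $15\mu_k^2>0$ it reproduces the paper's criterion for positive and negative $k$ alike. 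What each route buys: the paper's is shorter because the FM formula packages the homological-equation work into the denominators $L_0(1)$ and $2i\omega_k-L_0(e^{2i\omega_k\theta})$; yours yields a genuine quantitative Lyapunov coefficient (not just its sign), from which (c) follows cleanly through $\operatorname{sign}\mu_2=-\operatorname{sign}(\operatorname{Re}c_1)\operatorname{sign}(\operatorname{Re}\lambda'(\mu_k))$, and you are also slightly more careful than the paper in (a) about simplicity, nonresonance and the absence of other imaginary roots, which the paper leaves implicit. The only caveat is that your write-up summarizes rather than displays the $g_{20},g_{11},g_{02},g_{21}$ and $W_{20}(-1),W_{11}(-1)$ computations, so the proof is complete only once that bookkeeping is written out; as noted, the end result you assert is correct.
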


\begin{proof}
\begin{enumerate}[a)]
\item	The linearization  of equation \eqref{eq:1} is
	\begin{equation}x'(t)=-\mu x(t-1).\label{eq:lin}\end{equation}
	Searching for its solutions in form $x(t)=e^{\lambda t}$, we get
	$$\lambda e^{\lambda t}=-\mu e^{\lambda(t-1)},$$
	and the characteristic equation is
	$$\lambda=-\mu e^{-\lambda}.$$
	We would like to find the Hopf bifurcation points, so we substitute $\lambda=i\omega$, $\omega\in\mathbb R\setminus\{0\}$ and write
	$$i\omega=-\mu e^{-i\omega}=-\mu\cos\omega+i\mu\sin\omega.$$
	Taking real and imaginary parts, we get the following system of real equations
	\begin{eqnarray*}
		0&=&\mu\cos\omega,\\
		\omega&=&\mu\sin\omega.
	\end{eqnarray*}
	From the first of these equations, we get $\omega=\frac{\pi}{2}+n\pi$, $n\in\mathbb Z$. Substituting this into the second equation we have
	$$\frac{\pi}{2}+n\pi=\mu\sin\left(\frac{\pi}{2}+n\pi\right).$$
We distinguish two cases:
	\begin{itemize}
		\item$n=2k$, $k\in\mathbb Z$, in which case we find
		$$\frac{\pi}{2}+2k\pi=\mu\sin\left(\frac{\pi}{2}+2k\pi\right)=\mu;$$
		\item$n=2l+1$, $l\in\mathbb Z$, in which case we find
		\begin{eqnarray*}
			\frac{\pi}{2}+(2l+1)\pi=\mu\sin\left(\frac{\pi}{2}+(2l+1)\pi\right)=-\mu,
		\end{eqnarray*}
		which is equivalent to
		$$\frac{\pi}{2}-(2l+2)\pi=\mu .$$
			\end{itemize}
	We find that the via $k=-(l+1)$ the two cases can be treated together, and for the critical values we may just write $\mu_k=\frac{\pi}{2}+2k\pi=\frac{4k+1}{2}\pi$, $k\in\mathbb Z$.
For each $\mu_k$ there is a pair of critical eigenvalues $\pm i\omega_k$, where $\omega_k=\frac{4k+1}{2}\pi$. 

\item We follow the procedure developed in \cite{FM}. Let $L$ and $F$ be defined by the relation
$$L(\alpha)x_t+F(x_t,\alpha)=-(\mu_k+\alpha)f(x(t-1)),$$
where $x_t$ is the solution segment defined by $x_t(\theta)=x(t+\theta)$ for $\theta \in [-1,0]$, $L(\alpha)$ is a linear operator from $C([-1,0],\mathbb{R})$ to $\mathbb{R}$, $F$ is an operator from  $C([-1,0],\mathbb{R})\times \mathbb{R}$ to $\mathbb{R}$ with  $F(0,0)=0$ and $D_1F(0,0)=0$. We write
$$L(\alpha)=L_0+\alpha L_1+\frac{\alpha}{2}L_2+O(\alpha^3),$$
and for $(x_1,x_2,x_3,x_4) \in \mathbb{R}^4$,
\begin{eqnarray*} &F&(x_1e^{i\omega_k\theta}+x_2e^{-i\omega_k\theta}+x_3\cdot1+x_4e^{2i\omega_k\theta},0)
\\&&=B_{(2,0,0,0)}x_1^2+B_{(1,1,0,0)}x_1x_2+B_{(1,0,1,0)}x_1x_3+B_{(0,1,0,1)}x_2x_4
+B_{(2,1,0,0)}x_1^2x_2+\dots\end{eqnarray*}

Since $\mu_k=\omega_k=\frac{4k+1}{2}\pi$,  $e^{i\omega_k}=i$ holds.  For $\phi \in C([-1,0],\mathbb{R})$ we have
$$ L_0(\phi)=-\mu_k\phi(-1)=-\frac{4k+1}{2}\pi\phi(-1).$$
Hence,
\begin{eqnarray*}  &&L_0(1)=-\frac{4k+1}{2}\pi,\\&& L_0(\theta e^{i\omega_k\theta})=-\frac{4k+1}{2}\pi\left(-e^{-i\frac{4k+1}{2}\pi}\right)=-i\frac{4k+1}{2}\pi,
\\&&L_0(e^{2i\omega_k\theta})=-\frac{4k+1}{2}\pi\left(e^{-2i\frac{4k+1}{2}\pi}\right)
=\frac{4k+1}{2}\pi,\end{eqnarray*}
and the expansion of $F$ can be written as
\begin{eqnarray*}&&F(x_1e^{i\omega_k\theta}+x_2e^{-i\omega_k\theta}+
x_3\cdot1+x_4e^{2i\omega_k\theta},0)\\&&=-\frac{4k+1}{2}\pi\left(B(x_1(-i)+x_2i+x_3-x_4)^2+C(x_1(-i)+x_2i+x_3-x_4)^3+h.o.t.\right).\end{eqnarray*}
Then the $B_{(a,b,c,d)}$ coefficients are
\begin{eqnarray*} &&B_{(2,0,0,0)}=\frac{4k+1}{2}\pi B, \quad
 B_{(1,1,0,0)}=-
(4k+1)\pi B, \quad  B_{(1,0,1,0)}=(4k+1)\pi Bi, \\ &&B_{(0,1,0,1)}=(4k+1)\pi Bi,\quad
 B_{(2,1,0,0)}=\frac{3}{2}(4k+1)\pi Ci.\end{eqnarray*}
According to \cite{FM} (see formula (3.18) and Theorem 3.20), the direction of the bifurcation is determined by the sign of  
\begin{eqnarray*}  K=\mathrm{Re}\left[\frac{1}{1-L_0(\theta e^{i\omega_k\theta})}\left(B_{(2,1,0,0)}-\frac{B_{(1,1,0,0)}B_{(1,0,1,0)}}{L_0(1)}
+\frac{B_{(2,0,0,0)}B_{(0,1,0,1)}}{2i\omega_k-L_0(e^{2i\omega_k})}\right)\right]
.\end{eqnarray*}
We shall use the notation $a\sim b$ whenever $a=q b$ for some $q>0$. Substituting all terms into $K$, we need to find the sign of the real part of

\begin{eqnarray*} &&\frac{1}{1+i\frac{4k+1}{2}\pi}\Bigg(\frac{3}{2}(4k+1)\pi Ci-\frac{-(4k+1)\pi B(4k+1)\pi Bi}{-\frac{4k+1}{2}\pi}+\frac{\frac{4k+1}{2}\pi B(4k+1)\pi Bi}{i(4k+1)\pi-\frac{4k+1}{2}\pi}\Bigg)\\
&& = \frac{(2-(4k+1)\pi i)}{2(1+\frac{(4k+1)^2}{2^2})}(4k+1)\pi\left(\frac{3}{2}Ci-2B^2i+\frac{(-2i-1)B^2i}{5}\right)
\\&&\sim (2-(4k+1)\pi i)(4k+1)\left(\frac{3}{2}Ci+\frac{2}{5}B^2-\frac{11}{5}B^2i\right),\end{eqnarray*}
the latter expression having the real part
\begin{eqnarray*}(4k+1)\left[
(4k+1)\pi\frac{3}{2}C+\frac{4}{5}B^2-\frac{11}{5}(4k+1)\pi B^2\right]\sim\left[ C-\frac{22(4k+1)\pi-8}{15(4k+1)\pi}B^2\right].\end{eqnarray*}
\item To calculate in which direction a pair of characteristic roots crosses the imaginary axis at a bifurcation point, we differentiate the real part with respect to the parameter. Let us consider a parameter dependent solution of characteristic equation
$$\lambda=-\mu e^{-\lambda},$$
written as $\lambda(\mu)=\alpha(\mu)+i\omega(\mu)$,  where $\alpha$ and $\omega$ are the real and imaginary parts. Then we have
$$\alpha+i\omega=-\mu e^{-\alpha-i\omega}=-\mu e^{-\alpha}(\cos\omega-i\sin\omega).$$
Separating real and imaginary parts, we get
\begin{eqnarray*}
\alpha&=&-\mu e^{-\alpha}\cos\omega,\\
\omega&=&\mu e^{-\alpha}\sin\omega.
\end{eqnarray*}
Differentiating these equations with respect to $\mu$, we find
\begin{eqnarray*}
\alpha'&=&-e^{-\alpha}\cos\omega-\mu e^{-\alpha}(-\alpha')\cos\omega-\mu e^{-\alpha}(-\sin\omega)\omega',\\
\omega'&=&e^{-\alpha}\sin\omega+\mu e^{-\alpha}(-\alpha')\sin\omega+\mu e^{-\alpha}\cos(\omega)\omega'.
\end{eqnarray*}
Assume that the root is critical, i.e. $\alpha(\mu_k)=0$, and $\omega(\mu_k)=\omega_k$. As we have seen in part a), in the critical case $\cos\omega_k=0$ and $\sin\omega_k=1$. Then, evaluating the derivatives at a $\mu_k$, we obtain
\begin{eqnarray*}
\alpha'&=&\mu_k\omega',\\
\omega'&=&1-\mu_k\alpha'.
\end{eqnarray*}
Now we substitute $\omega'$ into the first equation, and express $\alpha'$ as
\begin{eqnarray*}
\alpha'(\mu_k)&=&\frac{\mu_k}{1+\mu_k^2}.
\end{eqnarray*}
Hence, $\alpha'(\mu_k)$ and $\mu_k$ has the same sign. This means that at a Hopf bifurcation, a pair of characteristic roots crosses the imaginary axis from left to right if and only if $\mu_k>0$. Hence the branch of a supercritical Hopf bifurcation starts to the right if and only if $\mu_k>0$, and the subcritical case is the opposite.
\end{enumerate}
\end{proof}

\begin{figure}[h]\label{fig1}
	\begin{center}\includegraphics[width=12cm]{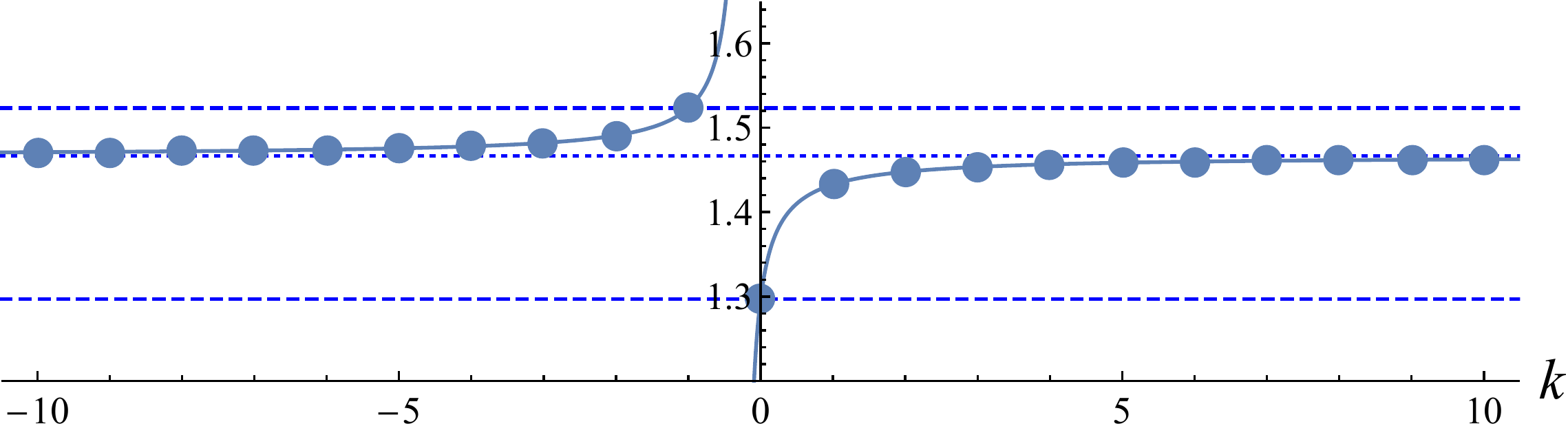}\end{center}
	\caption{Plot of  $H(k)=\frac{22(4k+1)\pi-8}{15(4k+1)\pi}$. The values are between $\frac{22\pi-8}{15\pi}$ and $\frac{66\pi+8}{45\pi}$, and they tend to $\frac{22}{15}$ as $k\to\pm\infty$. According to Theorem 1, when for some $k$, the value of $C/B^2$ is below $H(k)$, the $k$th bifurcation is supercritical.}
\end{figure}

\begin{figure}
\centering\small
\begin{tabular}{c c}
\includegraphics[height=1.5cm]{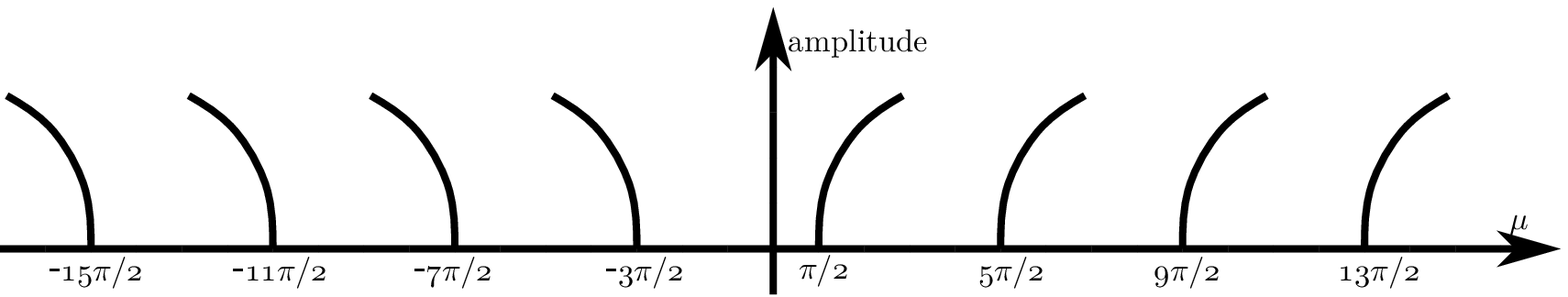}&\includegraphics[height=1.5cm]{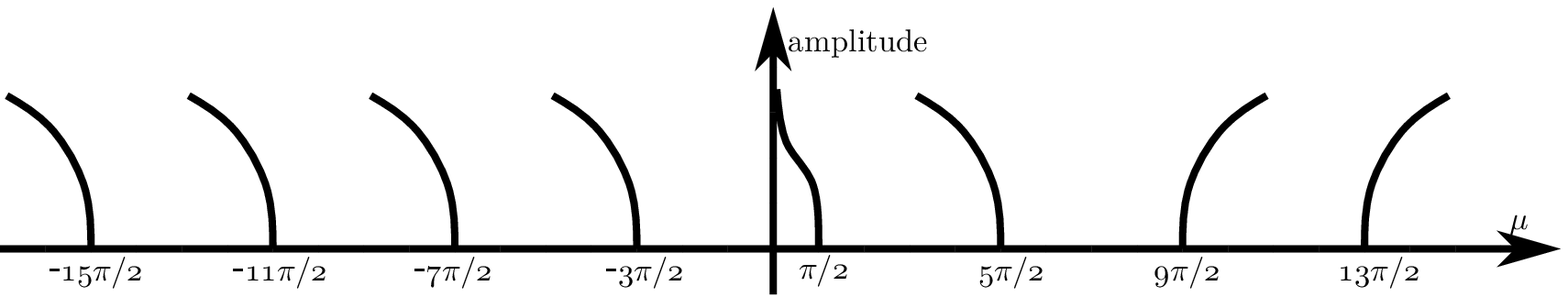}\\
(a) $C<\frac{22\pi-8}{15\pi}B^2$: All bifurcations are supercritical.&(b) $\frac{22\pi-8}{15\pi}B^2<C<\frac{22}{15}B^2$: There exists $n>0$ such that\\
&the $k$th bifurcation is subcritical iff $0\le k\le n$.\\\\
\includegraphics[height=1.5cm]{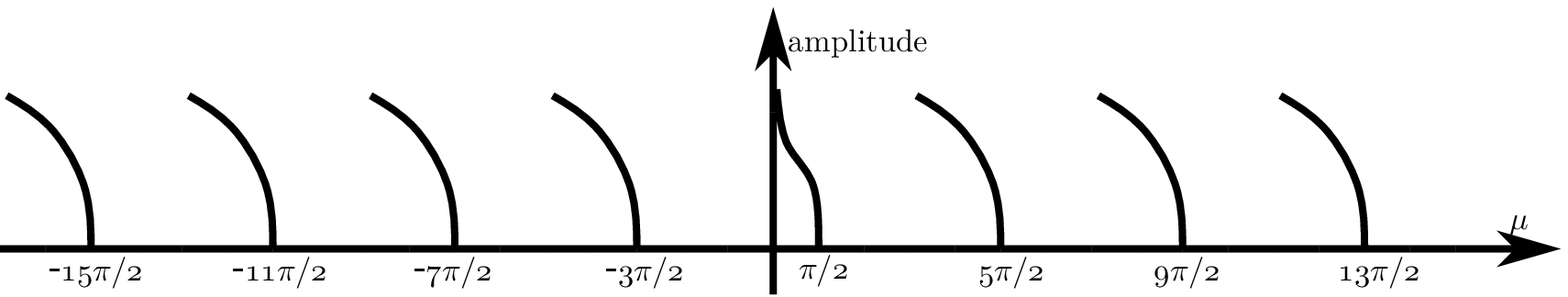}&\includegraphics[height=1.5cm]{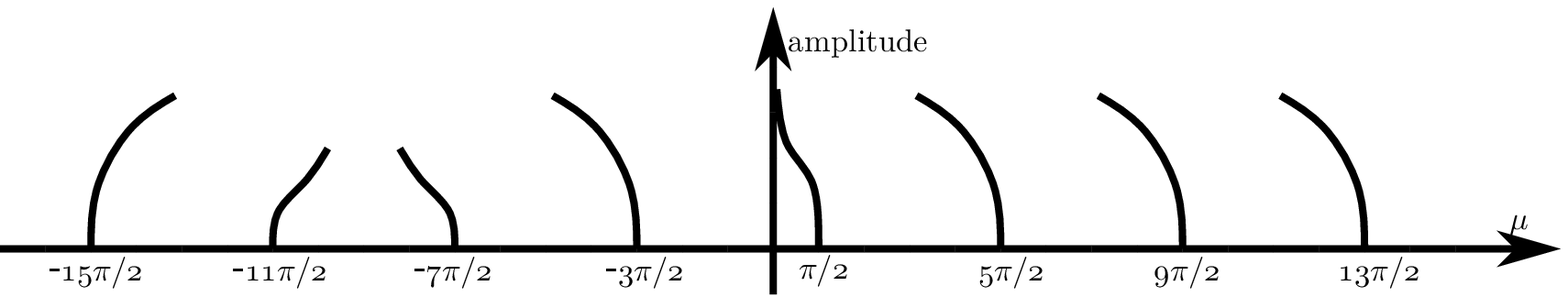}\\
(c) $C=\frac{22}{15}B^2$: The $k$th bifurcation is subcritical iff $k\ge0$.&(d) $\frac{22}{15}B^2<C<\frac{66\pi+8}{45\pi}B^2$: There exists $n<-1$ such that\\
&the $k$th bifurcation is subcritical iff $n\le k\le-1$.\\\\
\includegraphics[height=1.5cm]{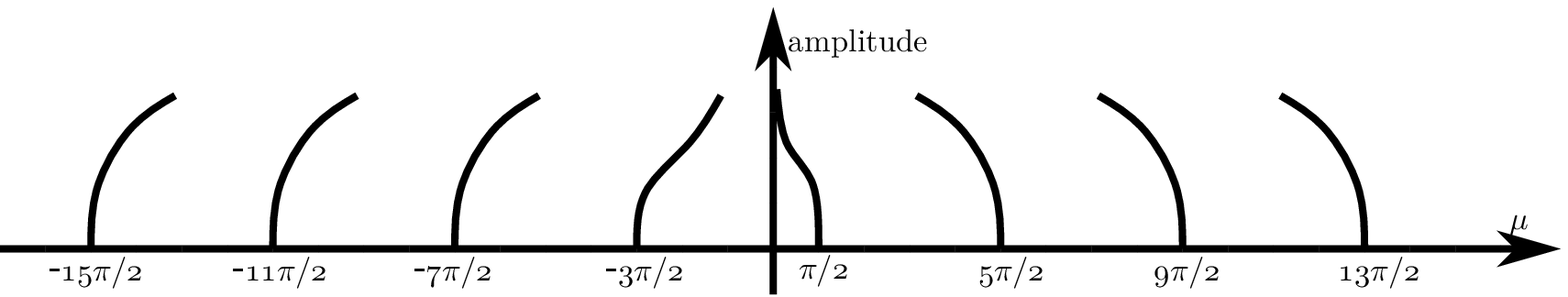}&{\includegraphics[height=1.5cm]{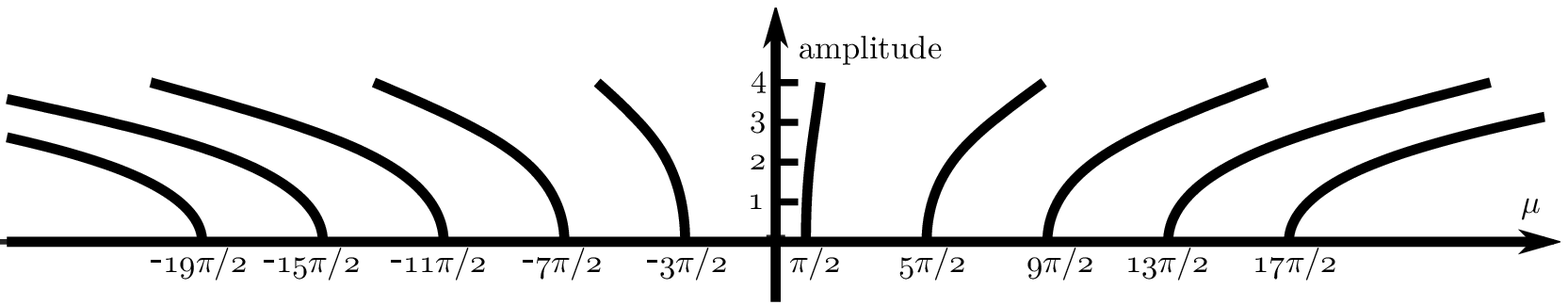}}\\
(e)  $C>\frac{66\pi+8}{45\pi}B^2$: All bifurcations are subcritical.&(f) Wright's equation

\end{tabular}
\caption{The possible configurations of bifurcation branches, based on Theorem 1.}
\end{figure}

As we mentioned, for the special case $k=0$, this result can be found in \cite{S}, page 97, which we now state as a corollary.

\begin{corollary}
The Hopf bifurcation at $\mu_0=\pi/2$ is supercritical if $C<\frac{22\pi-8}{15\pi}B^2$, and it is subcritical if  $C>\frac{22\pi-8}{15\pi}B^2$.
\label{cor:1}\end{corollary}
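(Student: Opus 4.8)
The plan is to derive Corollary~\ref{cor:1} directly from Theorem~\ref{th:1} by specializing to the index $k=0$. First I would note that part a) of Theorem~\ref{th:1} lists the critical values as $\mu_k=\frac{\pi}{2}+2k\pi$, so $\mu_0=\frac{\pi}{2}$ is precisely the bifurcation indexed by $k=0$; thus the corollary is exactly a statement about ``the $0$th bifurcation'' in the terminology of the theorem, and no new identification of the bifurcation point is required.

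Next I would substitute $k=0$, i.e.\ $4k+1=1$, into the criterion of part b). The threshold $\frac{22(4k+1)\pi-8}{15(4k+1)\pi}B^2$ collapses to exactly $\frac{22\pi-8}{15\pi}B^2$, and the alternative ``supercritical if $C$ lies below this value, subcritical if it lies above'' transfers verbatim. This already yields both implications of the corollary with no additional argument.

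The hard part is essentially absent: all the substantive work — forming the quantity $K$ from \cite{FM}, taking its real part, and reducing it to be proportional to $C-\frac{22(4k+1)\pi-8}{15(4k+1)\pi}B^2$ — was already carried out in the proof of Theorem~\ref{th:1}. The only point I would actually check is the bookkeeping at $4k+1=1$, namely that putting this value through the chain of simplifications reproduces the constants $22$, $8$, $15$ in the stated form; this is a one-line verification. For completeness I would also remark that this same special case is obtained in \cite{S}, page~97, via a direct center-manifold computation, which provides an independent confirmation of the formula.
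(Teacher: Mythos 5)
Your proposal is correct and matches the paper exactly: the paper states Corollary~\ref{cor:1} as the special case $k=0$ of Theorem~\ref{th:1} (noting the same result appears in \cite{S}, p.~97), with the threshold $\frac{22(4k+1)\pi-8}{15(4k+1)\pi}B^2$ reducing to $\frac{22\pi-8}{15\pi}B^2$ when $4k+1=1$, which is precisely your argument.
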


With the notation $H(k)=\frac{22(4k+1)\pi-8}{15(4k+1)\pi}$, we can see that the Hopf-bifurcation is supercritical when $C<H(k)B^2$. The function $H(k)$ is plotted in Figure 1, and from the shape of this function we easily find the following.

\begin{corollary}If $C<\frac{22\pi-8}{15\pi}B^2$ then every Hopf bifurcation is supercritical, if $C>\frac{66\pi+8}{45\pi}B^2$ then every Hopf bifurcation is subcritical.\label{cor:2}\end{corollary}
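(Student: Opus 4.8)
The plan is to reduce the statement to a monotonicity analysis of the single real function $H(k)=\frac{22(4k+1)\pi-8}{15(4k+1)\pi}$ from Theorem~\ref{th:1}(b) and then simply quote that theorem. The first step I would take is to rewrite $H$ in the transparent form
\[
H(k)=\frac{22}{15}-\frac{8}{15\pi(4k+1)},
\]
which immediately displays the asymptotic behaviour $H(k)\to\frac{22}{15}$ as $k\to\pm\infty$ and makes clear that everything is governed by the sign and magnitude of $4k+1$.

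Next I would split $\mathbb Z$ into $k\ge 0$ and $k\le -1$. For $k\ge 0$ we have $4k+1\ge 1>0$, so the subtracted term $\frac{8}{15\pi(4k+1)}$ is positive and strictly decreasing in $k$; hence $H$ is strictly increasing on $\{k\ge 0\}$, stays below $\frac{22}{15}$ there, and attains its minimum over this range at $k=0$, with $H(0)=\frac{22\pi-8}{15\pi}$. For $k\le -1$ we have $4k+1\le -3<0$, so $-\frac{8}{15\pi(4k+1)}$ is positive and strictly decreasing as $k\to-\infty$; hence $H$ is again strictly increasing on $\{k\le -1\}$, stays above $\frac{22}{15}$ there, and attains its maximum over this range at $k=-1$, with $H(-1)=\frac{22}{15}+\frac{8}{45\pi}=\frac{66\pi+8}{45\pi}$. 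Combining the two halves gives $\min_{k\in\mathbb Z}H(k)=H(0)=\frac{22\pi-8}{15\pi}$ and $\max_{k\in\mathbb Z}H(k)=H(-1)=\frac{66\pi+8}{45\pi}$.

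Finally I would feed these two extremal values back into Theorem~\ref{th:1}(b). If $C<\frac{22\pi-8}{15\pi}B^2$, then since $B^2\ge 0$ and $\frac{22\pi-8}{15\pi}\le H(k)$ for all $k$ we get $C<\frac{22\pi-8}{15\pi}B^2\le H(k)B^2$ for every $k\in\mathbb Z$, so every Hopf bifurcation is supercritical; symmetrically, if $C>\frac{66\pi+8}{45\pi}B^2$ then $C>\frac{66\pi+8}{45\pi}B^2\ge H(k)B^2$ for every $k$, so every bifurcation is subcritical. There is no genuinely hard step here: the only points requiring a little care are treating the sign of $4k+1$ separately on the two halves of $\mathbb Z$ (the graph of $H$ is increasing on each half, but the two branches lie on opposite sides of the horizontal asymptote $22/15$, so the global minimum and maximum come from different halves), and observing that multiplying the extremal inequalities by $B^2\ge 0$ preserves the strictness needed to invoke Theorem~\ref{th:1}(b), including in the degenerate case $B=0$.
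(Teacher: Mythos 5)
Your proposal is correct and follows essentially the same route as the paper: the paper also reduces the corollary to the observation (recorded in the caption of Figure~1) that $H(k)=\frac{22}{15}-\frac{8}{15\pi(4k+1)}$ ranges between its minimum $H(0)=\frac{22\pi-8}{15\pi}$ and its maximum $H(-1)=\frac{66\pi+8}{45\pi}$ and then invokes Theorem~\ref{th:1}(b). Your write-up merely makes explicit the monotonicity argument on the two halves of $\mathbb{Z}$ that the paper leaves to ``the shape of this function,'' and it is accurate, including the remark about $B^2\ge 0$ preserving strict inequalities.
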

For convenience, note that $\frac{22\pi-8}{15\pi}\approx 1.3$ and $\frac{66\pi+8}{45\pi}\approx 1.52$. Theorem 1 and its corollaries allow us to give a complete classification of possible bifurcation sequences, which are depicted in Figure 2.

\section{Applications}
\subsection{Wright's equation}

The classical Wright-Hutchinson equation (also called delayed logistic equation) 
$$y'(t)=-\mu y(t-1)(1+y(t)), \quad \mu>0,$$
can be transformed into the form
$$x'(t)=-\mu(e^{x(t-1)}-1)$$
by the change of variable $x(t)=\ln(1+y(t))$, for solutions $y>-1$. This latter
equation is of type \eqref{eq:1} with
$f(\xi)=e^\xi-1$, $B=\frac{1}{2}$, $C=\frac{1}{6}$. Since
$C\approx0.167<\frac{22\pi-8}{15\pi}B^2\approx 0.324$, we can apply Corollary \ref{cor:2} to obtain the following fact (which was also derived in \cite{FM}, page 197).
\begin{corollary}
In Wright's equation, every Hopf bifurcation is supercritical.\label{cor:3}
\end{corollary}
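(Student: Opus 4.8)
The plan is to derive Corollary~\ref{cor:3} directly from Corollary~\ref{cor:2}: the heavy lifting is already done in Theorem~\ref{th:1} and its corollaries, so what remains is to put Wright's equation into the form \eqref{eq:1}, read off the Taylor coefficients $B$ and $C$, and check a single numerical inequality.

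First I would record the reduction. The delayed logistic equation $y'(t)=-\mu y(t-1)(1+y(t))$ is \emph{not} itself of the form \eqref{eq:1}, because of the undelayed factor $1+y(t)$; this is why a substitution is needed. For solutions with $y>-1$, set $x(t)=\ln(1+y(t))$, which is a $C^\infty$ diffeomorphism of a neighbourhood of $0$ onto a neighbourhood of $0$. Then $x'(t)=y'(t)/(1+y(t))=-\mu y(t-1)=-\mu\bigl(e^{x(t-1)}-1\bigr)$, so in the new variable the equation is \eqref{eq:1} with $f(\xi)=e^{\xi}-1$. From $e^{\xi}-1=\xi+\tfrac12\xi^2+\tfrac16\xi^3+h.o.t.$ we get $f'(0)=1$ (so no renormalization via $\mu$ is needed), $B=f''(0)/2=\tfrac12$ and $C=f'''(0)/6=\tfrac16$.

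Next I would verify the hypothesis of Corollary~\ref{cor:2}: with $B=\tfrac12$ one has $\tfrac{22\pi-8}{15\pi}B^2=\tfrac{22\pi-8}{60\pi}\approx 0.324$, while $C=\tfrac16\approx 0.167$, hence $C<\tfrac{22\pi-8}{15\pi}B^2$. Corollary~\ref{cor:2} then yields that every Hopf bifurcation of $x'(t)=-\mu(e^{x(t-1)}-1)$ from the zero equilibrium is supercritical, and since $y\mapsto\ln(1+y)$ is a local diffeomorphism fixing the origin it conjugates the local semiflows and carries each bifurcating periodic orbit to one of the original equation with the same stability on the center manifold; so all Hopf bifurcations of Wright's equation are supercritical.

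The only step deserving a remark — and the nearest thing to an obstacle here — is this last transfer: one should note explicitly that criticality (super- vs.\ subcritical) is invariant under a $C^\infty$ change of variables fixing the equilibrium, since it is a property of the reduced flow on the center manifold. Everything else is a one-line application of Corollary~\ref{cor:2} together with the elementary estimate $\tfrac16<\tfrac{22\pi-8}{15\pi}\cdot\tfrac14$.
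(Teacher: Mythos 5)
Your proposal is correct and follows essentially the same route as the paper: transform via $x(t)=\ln(1+y(t))$ to get $f(\xi)=e^{\xi}-1$ with $B=\tfrac12$, $C=\tfrac16$, check $C<\tfrac{22\pi-8}{15\pi}B^2$, and apply Corollary~\ref{cor:2}. Your added remark on the invariance of criticality under the change of variables is a harmless (and reasonable) elaboration of what the paper leaves implicit.
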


\subsection{Ikeda equation}

The equation
$$y'(t)=-\sin(y(t-\mu))$$
arisen in the modeling of optical resonator systems. By rescaling, one has the equivalent form
$$x'(t)=-\mu\sin(x(t-1)),$$
which fits into \eqref{eq:1} with
$f(\xi)=\sin(\xi)$, $B=0$, $C=-\frac{1}{6}$. Since 
$C<0=\frac{22\pi-8}{15\pi}B^2=0$, Corollary \ref{cor:2} applies.
\begin{corollary}
In the Ikeda equation, every Hopf bifurcation is supercritical.\label{cor:4}
\end{corollary}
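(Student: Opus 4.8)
The plan is to recognize the Ikeda equation as a special case of \eqref{eq:1} and then quote Corollary~\ref{cor:2}. First I would apply the change of variables $s=t\mu$, $x(t)=z(s)$ from the Introduction, which rewrites $y'(t)=-\sin(y(t-\mu))$ as $x'(t)=-\mu\sin(x(t-1))$; this has the form \eqref{eq:1} with $f(\xi)=\sin\xi$. Since $\sin\xi=\xi-\frac{1}{6}\xi^{3}+h.o.t.$, one reads off $f'(0)=1$ (so the normalization $f'(0)=1$ holds automatically and $\mu$ needs no further rescaling), $B=f''(0)/2=0$, and $C=f'''(0)/6=-\frac{1}{6}$.

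Next I would invoke Corollary~\ref{cor:2}. Because $B=0$, the threshold appearing there is $\frac{22\pi-8}{15\pi}B^{2}=0$, while $C=-\frac{1}{6}<0$; hence $C<\frac{22\pi-8}{15\pi}B^{2}$ and every Hopf bifurcation is supercritical. Equivalently, one could argue directly from Theorem~\ref{th:1}b): for every $k$ the inequality $C<H(k)B^{2}=0$ holds since $B=0$, so each individual bifurcation is supercritical; and since the Ikeda equation is studied with $\mu>0$, Theorem~\ref{th:1}c) further tells us that all of these Hopf branches emanate to the right.

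The only point that calls for attention is the degenerate value $B=0$, which makes the threshold collapse to $0$. I would note that $B=0$ is in no way excluded in Theorem~\ref{th:1}: inspecting the formula for $K$ in its proof, the $B^{2}$-contributions vanish and $K$ reduces to a positive multiple of $C$, whose negativity yields supercriticality, consistent with Corollary~\ref{cor:2}. Beyond this trivial check there is no real obstacle — the proof is essentially a one-line substitution into an already-established classification.
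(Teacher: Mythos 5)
Your argument is correct and matches the paper's reasoning exactly: rescale to $x'(t)=-\mu\sin(x(t-1))$, read off $B=0$, $C=-\tfrac16$, and apply Corollary~\ref{cor:2} since $C<0=\frac{22\pi-8}{15\pi}B^2$. The extra remarks about the degenerate case $B=0$ and the rightward branches are fine but not needed.
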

\begin{figure}[h]\label{fig8}
\centering
(a) \includegraphics[scale=0.2]{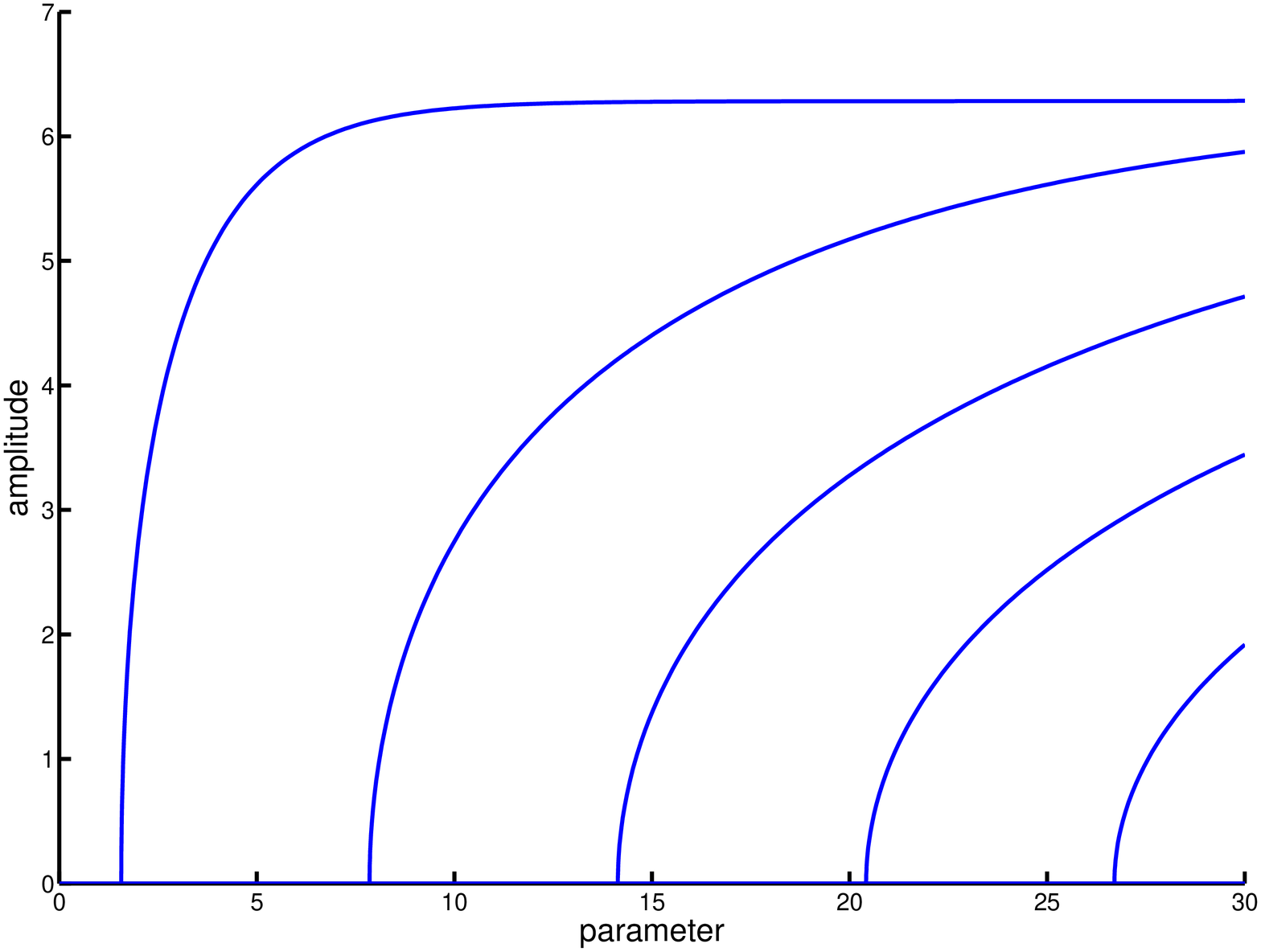} (b) \includegraphics[scale=0.2]{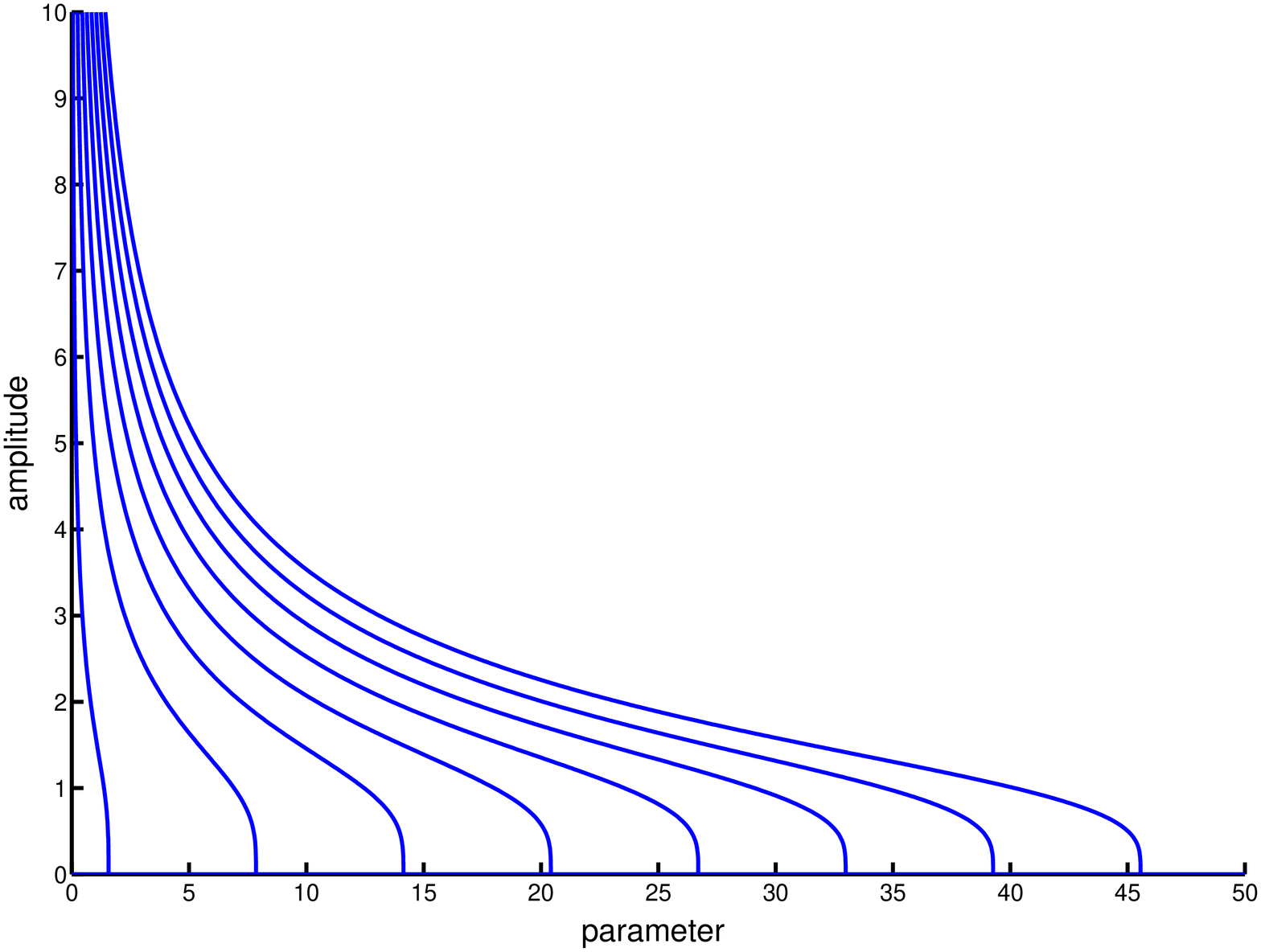}
\caption{(a) The bifurcation branches of the Ikeda equation. (b) The bifurcation branches of our totally subcritical example.}
\end{figure}

\subsection{A polynomial equation with criticality switching}\label{ex:switch}

Consider
$$x'(t)=-\mu(x(t-1)+x^2(t-1)+1.44x^3(t-1)),$$
that is $f(\xi)=\xi+\xi^2+1.44\xi^3$, $B=1$, $C=1.44$. Then
$$\frac{22(4\cdot1+1)\pi-8}{15(4\cdot1+1)\pi}<\frac{C}{B^2}<\frac{22(4\cdot2+1)\pi-8}{15(4\cdot2+1)\pi},$$
so the bifurcations at $\mu_0$ and $\mu_1$ are subcritical, the others are supercritical.

\subsection{A totally subcritical polynomial equation}

Consider
$$x'(t)=-\mu\left(x(t-1)+x^2(t-1)+\frac{22}{15}x^3(t-1)\right),$$
that is $f(\xi)=\xi+\xi^2+\frac{22}{15}\xi^2$, $B=1$, $C=\frac{22}{15}$. Then $\frac{C}{B}=\frac{22}{15}>\frac{22(4k+1)\pi-8}{15(4k+1)\pi}$, for all nonnegative integer $k$, so every Hopf bifurcation is subcritical for positive critical parameter values (and supercritical for negative parameter values).

\section{Period estimations}
Throughout this section we consider $\mu>0$. The following idea is known in the delay differential equation folklore as the Cooke-transform, which has been used for example in \cite{MPN,GK}. If $p(t)$ is a periodic solution of equation \eqref{eq:1} for parameter value $\mu=\mu_*>0$ with period $T$, then $q(t):=p((lT+1)t)$ is also a periodic solution of equation \eqref{eq:1} for parameter value $\mu=\mu_*(lT+1)$ with period $\frac{T}{lT+1}$, for any $l\in\mathbb N$.
This can be shown by the straightforward calculations
$$
q'(t)=-(\mu(lT+1))f(p((lT+1)t-lT-1))=-(\mu(lT+1))f(q(t-1))
$$
and
$$ q\left(t+\frac{T}{lT+1}\right)=p((lT+1)t+T)=p((lT+1)t)=q(t).$$
Thus we can define a map
$$C_l: (\mu_*,T,p(t)) \mapsto \left(\mu_*(lT+1),\frac{T}{lT+1},p((lT+1)t)\right),$$
where $p(t)$ is a periodic solution of equation \eqref{eq:1} with parameter value $\mu_*$ and period $T$.
\begin{proposition} Let $k,l \in \mathbb N$. Then near the bifurcation points, $C_l$ maps the kth bifurcation branch to the (k+l)th bifurcation branch. 
\end{proposition}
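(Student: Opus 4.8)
The plan is to combine the two explicit formulas built into the definition of $C_l$ with the local description of the Hopf branches provided by Theorem \ref{th:1} and the Hopf bifurcation theorem. First I would record the shape of the $k$th branch near its bifurcation point: by the Hopf bifurcation theorem applied to \eqref{eq:1} at $\mu_k=\tfrac{4k+1}{2}\pi$ (whose hypotheses are exactly parts a) and c) of Theorem \ref{th:1}, a simple pair of characteristic roots $\pm i\omega_k$ crossing the imaginary axis transversally), there is an amplitude parameter $\varepsilon$ and a $C^1$ family $\varepsilon\mapsto(\mu^{(k)}(\varepsilon),\,T^{(k)}(\varepsilon),\,p^{(k)}_\varepsilon)$ of nonconstant periodic solutions constituting the branch, with $\mu^{(k)}(0)=\mu_k$, $T^{(k)}(0)=2\pi/\omega_k=\tfrac{4}{4k+1}$, $\|p^{(k)}_\varepsilon\|\to 0$ as $\varepsilon\to 0$, and $T^{(k)}(\varepsilon)$ the \emph{minimal} period of $p^{(k)}_\varepsilon$.

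Next I would push such a triple through $C_l$ and let $\varepsilon\to 0$. Writing $T=T^{(k)}(\varepsilon)$, one computes
\[ lT+1=\frac{4l}{4k+1}+1+o(1)=\frac{4(k+l)+1}{4k+1}+o(1)\qquad(\varepsilon\to 0),\]
so the image triple $C_l\big(\mu^{(k)}(\varepsilon),T^{(k)}(\varepsilon),p^{(k)}_\varepsilon\big)$ has parameter
\[ \mu^{(k)}(\varepsilon)(lT+1)\longrightarrow \frac{4k+1}{2}\pi\cdot\frac{4(k+l)+1}{4k+1}=\mu_{k+l},\]
period $T^{(k)}(\varepsilon)/(lT+1)\to \tfrac{4}{4(k+l)+1}=2\pi/\omega_{k+l}$, and periodic solution $p^{(k)}_\varepsilon\big((lT+1)\,\cdot\big)$, which is a mere invertible linear time rescaling of $p^{(k)}_\varepsilon$; hence it has the same range (so its norm also tends to $0$) and its minimal period is exactly $T^{(k)}(\varepsilon)/(lT+1)$.

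The step that carries the real content is the conclusion that this image lies on the $(k+l)$th branch. Here I would invoke the local uniqueness statement of the Hopf bifurcation theorem at $\mu_{k+l}$: since $\pm i\omega_{k+l}$ is the only pair of characteristic roots of the linearization \eqref{eq:lin} on the imaginary axis when $\mu=\mu_{k+l}$ and the crossing is transversal, there are a neighbourhood $U\subset\mathbb{R}\times C([-1,0],\mathbb{R})$ of $(\mu_{k+l},0)$ and a neighbourhood $J$ of $2\pi/\omega_{k+l}$ such that the only nonconstant periodic solutions of \eqref{eq:1} whose (parameter, solution segment) lies in $U$ and whose minimal period lies in $J$ are, up to time translation, those on the $(k+l)$th branch. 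By the previous paragraph, for $\varepsilon$ small enough the image triple meets all of these constraints, hence lies on the $(k+l)$th branch; running this over a small interval of $\varepsilon$ shows $C_l$ maps a neighbourhood of the bifurcation point on the $k$th branch into the $(k+l)$th branch.

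I expect the main obstacle to be the bookkeeping in this last step: one must ensure that the periodic solution produced by $C_l$ is genuinely counted on the $(k+l)$th branch and not confused with a solution of a different minimal period, a subharmonic, or a piece of some other branch. This is precisely why it matters that $C_l$ preserves minimality of the period (it is an invertible linear rescaling of time) and that the amplitude of the image tends to $0$, which forces the image into the small neighbourhood $U$ where the $(k+l)$th branch is the unique local family of periodic orbits with period near $2\pi/\omega_{k+l}$. For completeness I would note the trivial case $l=0$, where $C_0=\mathrm{id}$, and remark that running the same computation with the (formal) inverse rescaling shows the correspondence is in fact onto a neighbourhood of the bifurcation point on the $(k+l)$th branch, although only the stated inclusion is needed.
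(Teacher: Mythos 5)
Your proposal is correct and follows essentially the same route as the paper: compute that the Cooke map sends the limiting data $(\mu_k, 2\pi/\omega_k)$ of the $k$th branch to $(\mu_{k+l}, 2\pi/\omega_{k+l})$, and then invoke local uniqueness of the Hopf branch at $\mu_{k+l}$ (the paper cites Theorem X.2.7 of Diekmann et al.\ for exactly this step). Your write-up merely spells out the bookkeeping (smallness of the transformed amplitude, preservation of minimal period under the time rescaling) that the paper leaves implicit.
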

\begin{proof}
Consider the Hopf-branch of periodic solutions near the parameter value $\mu_k$, $k\in\mathbb N$. Let $\delta_k=1$ if the $k$th bifurcation is supercritical, and let $\delta_k=-1$ if the $k$th bifurcation is subcritical.
Then, for any $k$ there is a local branch of periodic solutions $p^k_\eta(t)$ corresponding to parameter value $\mu=\mu_k+\delta_k\eta$ with $\eta \in (0,\eta_k)$ with some $\eta_k>0$. The minimal period of $p^k_\eta$ is denoted by $T^k_\eta$. Recall from the previous section that at the critical values $\mu_k=\frac{\pi}{2}+2k\pi=\frac{4k+1}{2}\pi$, the critical eigenvalue is $i\omega_k=i\frac{4k+1}{2}\pi$, hence $T^k_\eta \to \frac{2\pi}{\omega_k}=\frac{4}{4k+1}=:T_0^k$ as $\eta \to 0$. Notice that 
$$\mu_k(lT_0^k+1)=\frac{4k+1}{2}\pi\left( l\frac{4}{4k+1}+1\right)=\frac{\pi}{2}(4(k+l)+1)=\mu_{k+l}$$
and
$$\frac{T_0^k}{lT_0^k+1}=\frac{1}{l+\frac{4k+1}{4}}=\frac{4}{4l+4k+1}=\frac{2\pi}{\omega_{k+l}}=T_0^{k+l}. $$

From the uniqueness of local branches (see \cite[Theorem X.2.7]{DvGVLW}), we find that the Cooke-transform maps Hopf bifurcation branches to Hopf bifurcation branches.  
\end{proof}

\begin{theorem}
In equation \eqref{eq:1}, if $k\ge0$ and the $k$th Hopf bifurcation is supercritical, then we have the following estimate on the period of the Hopf solution near $\mu_k$:
$$T^k_\eta\geq \frac{4}{4k+1+\frac{2\eta}{\pi}}.$$\label{th:super}
\end{theorem}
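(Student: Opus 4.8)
The plan is to leverage the Cooke-transform machinery developed in the previous Proposition together with a base-case estimate at the first bifurcation point $\mu_0=\pi/2$. The key observation is that the inequality $T^k_\eta \ge \frac{4}{4k+1+2\eta/\pi}$ can be rewritten, by solving for what the ``parameter offset'' must be, as a statement about how the period relates to the distance $\mu - \mu_k = \eta$. More precisely, the claimed lower bound is exactly the period one would get by formally inverting the relation $\mu = \omega/\sin\omega$ (or rather, using that along the branch $\mu_k+\eta = \omega_\eta/\sin\omega_\eta$ to leading order) — so the natural approach is to first prove the $k=0$ case by a direct asymptotic/monotonicity argument near $\mu_0$, and then transport it to general $k\ge 0$ via $C_l$ with $l=k$.

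First I would establish the base case $k=0$. Here $T^0_\eta \to T_0^0 = 4$ as $\eta\to 0$, and $\mu = \pi/2 + \eta$ (supercritical, $\mu_0>0$, so the branch goes right). The claim is $T^0_\eta \ge \frac{4}{1+2\eta/\pi} = \frac{4\pi/2}{\pi/2+\eta} = \frac{2\pi/\omega_0 \cdot \mu_0}{\mu}$... more transparently, $T^0_\eta \ge \frac{2\pi}{\mu}$ rescaled: note $\frac{4}{1+2\eta/\pi} = \frac{2\pi}{\pi/2+\eta} = \frac{2\pi}{\mu}$. So the base case asserts $T^0_\eta \ge 2\pi/\mu$, i.e. $\mu T^0_\eta \ge 2\pi$. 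This is a clean inequality: the period of the bifurcating cycle times the parameter is at least $2\pi$. I would prove this by examining the characteristic equation / the asymptotic expansion of $T^0_\eta$ in $\eta$: writing $T^0_\eta = 4 - c\eta + o(\eta)$ for some constant $c$ determined by the Hopf normal form (specifically by $\omega'(\mu_0)$ and the nonlinear period correction), one checks that $\mu T^0_\eta = (\pi/2+\eta)(4 + O(\eta)) = 2\pi + (4 - \tfrac{\pi}{2}c)\eta + o(\eta)$, and the claimed inequality holds locally provided $4 - \tfrac{\pi}{2}c \ge 0$, i.e. an upper bound on how fast the period decreases. The sharper route is to use that along a periodic solution of period $T$ the ``frequency'' $\omega = 2\pi/T$ must satisfy the linearized-type constraint that forces $\mu \ge \omega/\sin\omega \ge \omega$ in the relevant range $\omega \in (0,\pi/2]$, hence $\mu T \ge 2\pi$; but making this rigorous for the genuinely nonlinear cycle is precisely the delicate point.

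Second, I would run the induction via the Cooke-transform. By the Proposition, $C_k$ maps the $0$th branch to the $k$th branch: a solution $p^0_{\eta'}$ with parameter $\mu_* = \pi/2 + \eta'$ and period $T^0_{\eta'}$ is sent to a solution on the $k$th branch with parameter $\mu_*(kT^0_{\eta'}+1)$ and period $\frac{T^0_{\eta'}}{kT^0_{\eta'}+1}$. I would match parameters: given $\eta$ small, choose $\eta'$ so that $\mu_*(kT^0_{\eta'}+1) = \mu_k + \eta = \frac{4k+1}{2}\pi + \eta$; this determines $\eta'$ as a smooth function of $\eta$ with $\eta'\to 0$ as $\eta\to 0$ (using $T^0_0 = 4$ and $\mu_0 (k\cdot 4 + 1) = \mu_k$). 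Then by uniqueness of local branches, $T^k_\eta = \frac{T^0_{\eta'}}{kT^0_{\eta'}+1}$, and substituting the base-case bound $T^0_{\eta'} \ge \frac{2\pi}{\pi/2+\eta'}$ gives, after the algebra, $T^k_\eta \ge \frac{4}{4k+1+2\eta/\pi}$ — the $k$-dependence coming out automatically because the map $C_k$ was built to respect exactly these relations (as shown in the Proposition's proof, $\mu_k(lT_0^k+1) = \mu_{k+l}$ and the analogous period identity). The bookkeeping is routine; the one thing to be careful about is that the parameter-matching $\eta \mapsto \eta'$ stays within the domain of the local branch and is monotone, so that the inequality transports in the right direction.

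The main obstacle is the base case, specifically making rigorous the claim $\mu T^0_\eta \ge 2\pi$ (equivalently $T^0_\eta \ge \frac{4}{1+2\eta/\pi}$) for the actual bifurcating limit cycle rather than just its linearization. The cleanest honest argument uses the second-order Hopf expansion: the minimal period along a supercritical Hopf branch has the form $T(\eta) = \frac{2\pi}{\omega_0}\bigl(1 + \tau_2 \eta + O(\eta^2)\bigr)$ where $\tau_2$ is a computable combination of the normal-form coefficients, and the amplitude satisfies $\eta \sim (\text{Re-coefficient})\,a^2$ with the Re-coefficient being exactly the $K$ computed in Theorem~\ref{th:1}b). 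So I would express $\tau_2$ in terms of $B$, $C$, $k=0$, extract the inequality $\frac{d}{d\eta}\big|_0 (\mu T) \ge 0$ from the explicit value, and then bootstrap from a local (in $\eta$) statement to the claimed form; alternatively, if the period function turns out to be monotone along the branch (which the later results of the paper on ``monotonicity properties of the periods'' presumably establish), the estimate follows more directly. I expect the paper to take the expansion route, so I would present the base case as a short computation with the normal-form period coefficient and then invoke the Cooke-transform for the inductive step.
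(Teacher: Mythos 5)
There is a genuine gap, and it sits exactly where you flagged it: your argument reduces everything to the unproven base case $\mu T^0_\eta\ge 2\pi$ at $k=0$, and that base case does not follow from the route you sketch. Supercriticality controls the \emph{real} part $K$ of the normal-form coefficient, while the first-order correction to the period is governed by a different combination of $B$ and $C$ (the imaginary part), so there is no a priori reason why $\frac{d}{d\eta}\big|_0(\mu T^0_\eta)\ge 0$ should be deducible from the hypothesis, and if that derivative vanishes the local expansion is inconclusive; your alternative heuristic ($\mu\ge\omega/\sin\omega$ for the nonlinear cycle) is, as you admit, not an argument. Moreover, the reduction to $k=0$ does not even cover the full statement: the theorem assumes only that the $k$th bifurcation is supercritical, and by the classification (Figure 2(b), Example 3.3) the $0$th bifurcation can be subcritical while the $k$th is supercritical, in which case there is no supercritical $0$th branch to transport and your claimed base-case inequality is unavailable.

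The paper's proof needs no base case and no expansion of the period function. It applies $C_l$ \emph{forward} from the $k$th branch: since $H(k)$ is increasing for $k\ge 0$, supercriticality at $k$ forces supercriticality at every $k+l$, $l\in\mathbb N$; by Proposition 1 the image of the point $(\mu_k+\eta,\,T^k_\eta)$ lies on the $(k+l)$th branch, which (being supercritical with $\mu_{k+l}>0$) exists only to the right of $\mu_{k+l}$, hence
\begin{equation*}
(\mu_k+\eta)(lT^k_\eta+1)>\mu_{k+l}
\quad\Longrightarrow\quad
T^k_\eta>\frac{4-\frac{2\eta}{l\pi}}{4k+1+\frac{2\eta}{\pi}},
\end{equation*}
and letting $l\to\infty$ gives the stated bound. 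So the only analytic input is the \emph{direction} of the later bifurcations, not any quantitative information about the period along the $0$th branch; this is what makes the argument work uniformly in $k$, including the criticality-switching configurations that your reduction cannot reach. If you want to salvage your scheme, you would have to replace the base case by exactly this kind of comparison, at which point you have reproduced the paper's proof.
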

\begin{proof}
If the $k$th bifurcation is supercritical, then $\delta_k=1$, and by Corollary 2, all the $(k+l)$th bifurcations ($l \in \mathbb N$) are supercritical as well.
Then, taking into account Proposition 1, 
\begin{equation} (\mu_k+\eta)(lT^k_\eta+1)>\mu_{k+l},\label{ellentmondashoz}\end{equation}
that is
$$T^k_\eta>\left(\frac{\mu_{k+l}}{\mu_k+\eta}-1\right)l^{-1}=\frac{4-\frac{2\eta}{l\pi}}
{4k+1+\frac{2\eta}{\pi}}. $$
This inequality holds for any $l \in \mathbb N$, thus letting $l \to \infty$ we finish the proof. 
\end{proof}

\begin{theorem}
If in equation \eqref{eq:1} all Hopf bifurcations are subcritical and $k\ge0$, then we have the following estimate on the period of the Hopf solution near $\mu_k$:
$$T^k_\eta\leq \frac{4}{4k+1-\frac{2\eta}{\pi}}.$$\label{th:sub}
\end{theorem}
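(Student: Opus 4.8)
The plan is to mirror the proof of Theorem~\ref{th:super}, but reverse every inequality, since the subcritical hypothesis flips the sign of $\delta_k$ and hence the direction in which the branch emanates from $\mu_k$. First I would record the setup: under the standing assumption $\mu>0$ and the hypothesis that every Hopf bifurcation is subcritical, we have $\delta_{k+l}=-1$ for all $l\in\mathbb N$, so along the $(k+l)$th branch the periodic solution $p^{k+l}_{\cdot}$ exists for parameter values \emph{less than} $\mu_{k+l}$. The Cooke-transform $C_l$ sends the $k$th branch to the $(k+l)$th branch by Proposition~1, taking the solution at parameter $\mu_k+\delta_k\eta=\mu_k-\eta$ and period $T^k_\eta$ to a solution at parameter $(\mu_k-\eta)(lT^k_\eta+1)$ and period $T^k_\eta/(lT^k_\eta+1)$.

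Next I would extract the key inequality. Since the image lies on the $(k+l)$th branch, which only exists for parameters below $\mu_{k+l}$, and since (for $\eta$ small) the image parameter is near $\mu_{k+l}$, we must have
\begin{equation}(\mu_k-\eta)(lT^k_\eta+1)<\mu_{k+l}.\label{eq:subineq}\end{equation}
Rearranging, using $\mu_{k+l}=\mu_k+2l\pi$ and $\mu_k=\frac{4k+1}{2}\pi$, gives
$$T^k_\eta<\left(\frac{\mu_{k+l}}{\mu_k-\eta}-1\right)l^{-1}=\frac{4+\frac{2\eta}{l\pi}}{4k+1-\frac{2\eta}{\pi}}.$$
This holds for every $l\in\mathbb N$; letting $l\to\infty$ the $\frac{2\eta}{l\pi}$ term vanishes and we obtain $T^k_\eta\le \frac{4}{4k+1-\frac{2\eta}{\pi}}$, which is the claim. (One could equally well keep $l=1$ or take $l\to\infty$; the limit gives the stated clean bound, and passing from strict to non-strict inequality is harmless.)

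The main obstacle — and the one point deserving care rather than a one-line copy of the previous proof — is justifying inequality~\eqref{eq:subineq} itself, i.e.\ that the Cooke-image really does land on the \emph{local} branch in the correct parameter direction. Proposition~1 only asserts the branch-to-branch correspondence near the bifurcation points, so I need $\eta$ small enough that $C_l$ of the $k$th branch stays inside the neighbourhood of $\mu_{k+l}$ where uniqueness of the local branch (from \cite[Theorem X.2.7]{DvGVLW}) applies; there the branch is parametrized monotonically by the distance from $\mu_{k+l}$ on the subcritical side, forcing the image parameter to be $<\mu_{k+l}$. A secondary subtlety is that here we only need \emph{all} bifurcations subcritical (which is the hypothesis), rather than invoking Corollary~2 as in Theorem~\ref{th:super}; so no monotonicity-in-$k$ argument about $H(k)$ is required — the hypothesis directly supplies $\delta_{k+l}=-1$ for all $l$. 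Modulo that bookkeeping, the estimate follows exactly as above.
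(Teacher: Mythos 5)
Your argument is correct and follows essentially the same route as the paper: apply the Cooke-transform $C_l$ to the subcritical $k$th branch, use Proposition~1 together with $\delta_{k+l}=-1$ (supplied directly by the hypothesis) to get $(\mu_k-\eta)(lT^k_\eta+1)<\mu_{k+l}$, rearrange, and let $l\to\infty$. Your added remarks — that the hypothesis replaces the Corollary~2 step needed in the supercritical case, and that one must keep $\eta$ small enough for the image to land in the local uniqueness neighbourhood of $\mu_{k+l}$ — are sound elaborations of what the paper leaves implicit.
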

\begin{proof}
Now for any $k,l \in \mathbb{N}$, $\delta_k=\delta_l=-1$, and by Proposition 1, 
$$(\mu_k-\eta)(lT^k_\eta+1)<\mu_{k+l},$$
that is
$$T^k_\eta<\left(\frac{\mu_{k+l}}{\mu_k-\eta}-1\right)l^{-1}=\frac{4+\frac{2\eta}{l\pi}}
{4k+1-\frac{2\eta}{\pi}}. $$
This inequality holds for any $l \in \mathbb N$, thus letting $l \to \infty$ we finish the proof. 
\end{proof}

\begin{theorem}\label{th:switch}
If $\frac{22\pi-8}{15\pi}B^2\le C<\frac{22}{15}B^2$ and $k\ge0$, then define
$$n:=\max\left\{m\in\mathbb N_0:\ C>\frac{22(4m+1)\pi-8}{15(4m+1)\pi}B^2\right\}.$$
If $k<n$ then near $\mu_k$ we have the estimates
$$\frac{4+\frac{2\eta}{(n-k+1)\pi}}{4k+1-\frac{2\eta}{\pi}}<T_\eta^k<\frac{4+\frac{2\eta}{(n-k)\pi}}{4k+1-\frac{2\eta}{\pi}}.$$
If $k=n$ then we only have the lower estimate:
$$T_\eta^k>\frac{4+\frac{2\eta}{\pi}}{4k+1-\frac{2\eta}{\pi}}.$$
\end{theorem}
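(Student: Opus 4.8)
The plan is to mimic the arguments of Theorems \ref{th:super} and \ref{th:sub}, exploiting Proposition 1 and the Cooke-transform, but now being careful about which of the higher bifurcations ($\mu_{k+l}$) are subcritical and which are supercritical. The definition of $n$ guarantees, via Theorem \ref{th:1} part b) and the monotonicity of $H(m)=\frac{22(4m+1)\pi-8}{15(4m+1)\pi}$ on $m\ge 0$ (it increases from $H(0)=\frac{22\pi-8}{15\pi}$ up towards $\frac{22}{15}$), that for $0\le m\le n$ the $m$th bifurcation is subcritical ($\delta_m=-1$), while for all $m>n$ it is supercritical ($\delta_m=+1$). So when we transport the $k$th branch (with $k\le n$) forward by $C_l$, the target branch at $\mu_{k+l}$ is subcritical precisely when $k+l\le n$, i.e. $l\le n-k$, and supercritical when $l\ge n-k+1$.

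First I would treat the subcritical targets: for $1\le l\le n-k$ we have, exactly as in Theorem \ref{th:sub}, that $(\mu_k-\eta)(lT_\eta^k+1)<\mu_{k+l}$, which rearranges to
$$T_\eta^k<\frac{\mu_{k+l}-(\mu_k-\eta)}{l(\mu_k-\eta)}=\frac{4l+\frac{2\eta}{\pi}}{l\left(4k+1-\frac{2\eta}{\pi}\right)}=\frac{4+\frac{2\eta}{l\pi}}{4k+1-\frac{2\eta}{\pi}}.$$
The right-hand side is decreasing in $l$, so the sharpest upper bound among these is obtained at the largest admissible $l$, namely $l=n-k$, giving the claimed upper estimate $T_\eta^k<\frac{4+\frac{2\eta}{(n-k)\pi}}{4k+1-\frac{2\eta}{\pi}}$ when $k<n$. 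Next I would treat the supercritical targets: for $l\ge n-k+1$ we instead have, as in Theorem \ref{th:super}, $(\mu_k-\eta)(lT_\eta^k+1)>\mu_{k+l}$ (note the sign flip: the $k$th branch still goes to the \emph{left}, so the parameter is $\mu_k-\eta$, but the target branch at $\mu_{k+l}$ opens to the right, forcing the strict reverse inequality), which rearranges to
$$T_\eta^k>\frac{4+\frac{2\eta}{l\pi}}{4k+1-\frac{2\eta}{\pi}}.$$
Here the right-hand side is again decreasing in $l$, but now we want a lower bound valid for \emph{all} $l\ge n-k+1$, so letting $l\to\infty$ would only give $T_\eta^k\ge\frac{4}{4k+1-2\eta/\pi}$, which is weaker than using the smallest admissible $l$. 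Taking $l=n-k+1$ yields the lower estimate $T_\eta^k>\frac{4+\frac{2\eta}{(n-k+1)\pi}}{4k+1-\frac{2\eta}{\pi}}$, and for $k=n$ the set of subcritical targets is empty, $l=1$ is already supercritical, and this lower bound with $n-k+1=1$ is exactly the stated $T_\eta^k>\frac{4+\frac{2\eta}{\pi}}{4k+1-\frac{2\eta}{\pi}}$.

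The main obstacle I anticipate is not the algebra but making the direction-of-inequality bookkeeping airtight: one must consistently use $\mu_k-\eta$ (since $\delta_k=-1$) on the source side, and then correctly read off from part c) of Theorem \ref{th:1} whether the \emph{target} Hopf branch at $\mu_{k+l}>0$ opens right (supercritical) or left (subcritical), which determines whether $(\mu_k-\eta)(lT_\eta^k+1)$ lies to the right or to the left of $\mu_{k+l}$ and hence the direction of the strict inequality. A secondary point worth stating explicitly is that $H$ is strictly increasing on $\{0,1,2,\dots\}$, which is what makes the index set $\{0,1,\dots,n\}$ of subcritical bifurcations an initial segment and legitimizes the definition of $n$; this is already implicit in Corollary \ref{cor:2} and Figure 2(b), so I would simply cite it. Finally, one should note that all these inequalities require $\eta$ small enough that $p^k_\eta$ is defined on the branch and that $4k+1-\frac{2\eta}{\pi}>0$, which holds for $\eta$ in a possibly shrunk interval $(0,\eta_k')\subseteq(0,\eta_k)$.
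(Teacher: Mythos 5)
Your proposal is correct and follows essentially the same route as the paper: transport the subcritical $k$th branch (parameter $\mu_k-\eta$) forward by the Cooke-transform, use the subcritical targets with $l\le n-k$ to get the upper bound and the supercritical targets with $l\ge n-k+1$ to get the lower bound, and pick $l=n-k$ and $l=n-k+1$ respectively for the sharpest estimates, with $k=n$ giving only the lower bound at $l=1$. The paper's proof simply chooses these two extremal indices directly, while you additionally spell out the monotonicity in $l$ and the inequality-direction bookkeeping, which matches the paper's argument in substance.
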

\begin{proof}
Assume that $k\le n$. Then the $k$th bifurcation is subcritical, $\delta_k=-1$.

First suppose that $l_1>0$ and the $(k+l_1)$th bifurcation is supercritical. Then
$$(\mu_k-\eta)(l_1T_\eta^k+1)>\mu_{k+l_1},$$
that implies
$$T_\eta^k>\frac{4+\frac{2\eta}{l_1\pi}}{4k+1-\frac{2\eta}{\pi}}.$$
Now we choose $l_1$ to be the minimal index which still gives a supercritical bifurcation, that is $l_1:=n-k+1$.
Next, suppose that the $(k+l_2)$th bifurcation is subcritical. This is only possible if $k<n$. Then
$$(\mu_k-\eta)(l_2T_\eta^k+1)<\mu_{k+l_2},$$
that implies
$$T_\eta^k<\frac{4+\frac{2\eta}{l_2\pi}}{4k+1-\frac{2\eta}{\pi}}.$$
Finally, choose $l_2$ to be the maximal index that still gives subcritical bifurcation, that is $l_2:=n-k$.
\end{proof}

In some situation this theorem provides very sharp estimations of the period function, which is illustrated in Figure 4. 

\begin{figure}\label{fig9}
\begin{center}\includegraphics[scale=0.4]{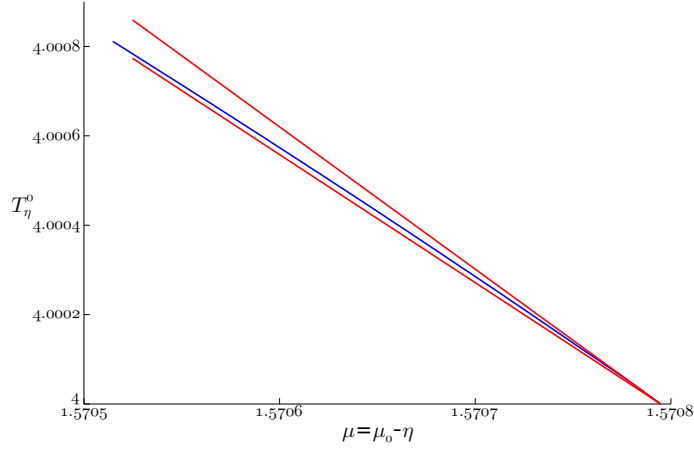}\end{center}
\caption{Narrow estimates on the period of the 0th branch of Example \ref{ex:switch} by Theorem \ref{th:switch} (red curves) compared to numerically obtained periods (blue curve).}
\end{figure}

\begin{corollary}\label{cor:subsub}
If in equation \eqref{eq:1} the $k$th Hopf bifurcation is subcritical for some $k\geq 0$, and the  periods satisfy $T^k_\eta<T^k_0$ near $\mu_k$, then for all $l>0$ the $(k+l)$th Hopf bifurcation is also subcritical.
\end{corollary}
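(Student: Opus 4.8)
The plan is to run the Cooke-transform argument of Proposition 1 once more, this time keeping track of \emph{which side} of $\mu_{k+l}$ the image of the $k$th branch lands on. Recall from Proposition 1 that $T^k_0=\frac{4}{4k+1}$, that $\mu_k\bigl(lT^k_0+1\bigr)=\mu_{k+l}$ for every $l\in\mathbb N$, and that $C_l$ maps the local $k$th Hopf branch onto the local $(k+l)$th Hopf branch.

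Fix $l>0$. Since the $k$th bifurcation is subcritical, $\delta_k=-1$, so there is a family of periodic solutions $p^k_\eta$ of \eqref{eq:1} at parameter value $\mu_k-\eta$, $\eta\in(0,\eta_k)$, with minimal period $T^k_\eta\to T^k_0$ as $\eta\to0$. Applying $C_l$, the solution $p^k_\eta$ becomes a periodic solution at parameter value $\nu(\eta):=(\mu_k-\eta)\bigl(lT^k_\eta+1\bigr)$ with minimal period $T^k_\eta/\bigl(lT^k_\eta+1\bigr)$. By Proposition 1 together with the uniqueness of local Hopf branches \cite[Theorem X.2.7]{DvGVLW}, for all sufficiently small $\eta>0$ this periodic solution lies on the local $(k+l)$th Hopf branch: indeed $\nu(\eta)\to\mu_{k+l}$ and the period tends to $T^{k+l}_0=2\pi/\omega_{k+l}$ as $\eta\to0$, so $\nu(\eta)$ eventually enters the neighbourhood of $\mu_{k+l}$ in which that branch is the only local branch.

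Next I would invoke the hypothesis $T^k_\eta<T^k_0$: for all small $\eta>0$ both factors strictly decrease, $0<\mu_k-\eta<\mu_k$ and $0<lT^k_\eta+1<lT^k_0+1$, whence
$$\nu(\eta)=(\mu_k-\eta)\bigl(lT^k_\eta+1\bigr)<\mu_k\bigl(lT^k_0+1\bigr)=\mu_{k+l}.$$
Thus the local $(k+l)$th Hopf branch contains periodic solutions at parameter values strictly smaller than $\mu_{k+l}$. But by part c) of Theorem \ref{th:1}, a \emph{supercritical} bifurcation at the positive critical value $\mu_{k+l}$ produces local periodic solutions only for $\mu>\mu_{k+l}$, which contradicts the previous sentence. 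Hence the $(k+l)$th bifurcation must be subcritical; since $l>0$ was arbitrary, the corollary follows.

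The one step that requires care is the identification in the second paragraph — verifying that the image triple $\bigl(\nu(\eta),\,T^k_\eta/(lT^k_\eta+1),\,C_lp^k_\eta\bigr)$ really lands on the \emph{local} $(k+l)$th branch, and not on some globally continued piece of another branch or near a different critical value. This is exactly the issue already settled in the proof of Proposition 1, and it is handled by the limits $\nu(\eta)\to\mu_{k+l}$ and $T^k_\eta/(lT^k_\eta+1)\to T^{k+l}_0$ as $\eta\to0$ combined with the local uniqueness theorem; beyond this bookkeeping I expect no genuine obstacle, as the argument is essentially a sign computation.
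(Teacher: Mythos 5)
Your argument is correct and is essentially the paper's own proof: the same key inequality $(\mu_k-\eta)(lT^k_\eta+1)<\mu_k(lT^k_0+1)=\mu_{k+l}$, combined with Proposition 1 (the Cooke-transform maps the local $k$th branch onto the local $(k+l)$th branch) and Theorem \ref{th:1}c) to conclude subcriticality from the branch lying to the left of $\mu_{k+l}$. You merely spell out the identification with the local branch and the sign bookkeeping that the paper leaves implicit.
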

\begin{proof}
If the $k$th Hopf bifurcation is subcritical, then 
$$(\mu_k-\eta)(lT^k_\eta+1)< \mu_k(lT^k_0+1)=\mu_{k+l}.$$
This means that the Cooke-transform maps the $k$th branch to the left side of $\mu_{k+l}$, thus the $(k+l)$th bifurcation is also subcritical.
\end{proof}

In the situation of $\frac{22\pi-8}{15\pi}B^2<C<\frac{22}{15}B^2$ (see Figure 2.b.), we can infer the monotonicity of the period functions at the subcritical bifurcations, as the next corollary shows.

\begin{corollary}\label{cor:subsub}
	If in equation \eqref{eq:1} the $k$th Hopf bifurcation is subcritical for some $k\geq 0$, but the $(k+l)$th Hopf bifurcation is supercritical for any $l>0$, then $T^k_\eta>0$ is monotone increasing for small $\eta$.
\end{corollary}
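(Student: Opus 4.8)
The plan is to push the $k$th branch through the Cooke-transform $C_1$ and exploit that, by hypothesis, its image lies on the \emph{supercritical} $(k+1)$th branch; this pins the parameter value of the image strictly above $\mu_{k+1}$, and the monotonicity of $T^k_\eta$ then drops out of a short injectivity argument, with no explicit normal-form computation.

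First I would fix the notation of Proposition 1. Since the $k$th bifurcation is subcritical, $\delta_k=-1$, so the branch consists of the solutions $p^k_\eta$ at parameter $\mu_k-\eta$, $\eta\in(0,\eta_k)$, with minimal period $T^k_\eta\to T^k_0=\tfrac{4}{4k+1}$ as $\eta\to0^+$; by hypothesis the $(k+1)$th bifurcation is supercritical, so its branch consists of the solutions $p^{k+1}_\xi$ at parameter $\mu_{k+1}+\xi$, $\xi\in(0,\eta_{k+1})$. By Proposition 1, for all sufficiently small $\eta$ the transform $C_1$ carries $p^k_\eta$ to a point of the $(k+1)$th branch with parameter value and minimal period
$$a_\eta:=(\mu_k-\eta)\bigl(T^k_\eta+1\bigr),\qquad \tau_\eta:=\frac{T^k_\eta}{T^k_\eta+1}.$$
Since that branch is supercritical, $a_\eta>\mu_{k+1}$ for $\eta>0$, while $a_\eta\to\mu_k(T^k_0+1)=\mu_{k+1}$ as $\eta\to0^+$.

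The crux is to show that $\eta\mapsto a_\eta$ is strictly increasing near $0$. Continuity is immediate from the smoothness of the Hopf branch. For injectivity I would combine three facts: $C_1$ is injective on triples $(\mu,T,p)$ (elementary, since $T\mapsto T/(T+1)$ and then $\mu\mapsto\mu(T+1)$ are injective); $\eta\mapsto(\mu_k-\eta,T^k_\eta,p^k_\eta)$ is injective; and, near $\mu_{k+1}$, the supercritical $(k+1)$th branch is a simple arc on which the parameter value is a strictly monotone coordinate, by uniqueness of the local Hopf branch \cite[Thm X.2.7]{DvGVLW} together with the transversality $\alpha'(\mu_{k+1})\neq0$ from the proof of Theorem \ref{th:1}. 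Composing these, $\eta\mapsto a_\eta$ is injective; a continuous injection of an interval is strictly monotone, and since $a_\eta>\mu_{k+1}=\lim_{\eta\to0^+}a_\eta$ it is strictly increasing. Then for $0<\eta_1<\eta_2$ small, writing $a_\eta=(\mu_k-\eta)T^k_\eta+(\mu_k-\eta)$, the inequality $a_{\eta_2}>a_{\eta_1}$ becomes $(\mu_k-\eta_2)T^k_{\eta_2}-(\mu_k-\eta_1)T^k_{\eta_1}>\eta_2-\eta_1>0$; dividing by $\mu_k-\eta_2>0$ and using $\mu_k-\eta_1>\mu_k-\eta_2$ gives $T^k_{\eta_2}>\tfrac{\mu_k-\eta_1}{\mu_k-\eta_2}\,T^k_{\eta_1}>T^k_{\eta_1}$, which is the desired monotonicity.

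I expect the only genuinely delicate step to be the injectivity claim, i.e.\ that $C_1$ maps the $k$th branch onto a one-parameter sub-arc of the supercritical $(k+1)$th branch on which $\mu$ is a strictly monotone coordinate; this is exactly where uniqueness of local Hopf branches is needed, and it is already implicit in Proposition 1. Everything else is bookkeeping; I would only need $\mu_k>0$ (ensured by $k\ge0$) and $\eta$ small enough that $\mu_k-\eta>0$ and Proposition 1 applies. (With $l=1$ the argument in fact uses only that the $(k+1)$th bifurcation is supercritical, not the full hypothesis on all $(k+l)$th.)
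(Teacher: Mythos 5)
Your proposal is correct and follows essentially the same route as the paper: the paper's proof consists precisely of the comparison $(\mu_k-\eta_1)(lT^k_{\eta_1}+1)<(\mu_k-\eta_2)(lT^k_{\eta_2}+1)$ for $\eta_1<\eta_2$ (asserted without justification, i.e.\ the monotonicity of the Cooke-image parameter along the supercritical branch is taken for granted) followed by the same algebraic deduction $T^k_{\eta_1}<T^k_{\eta_2}$, and your injectivity/continuity argument simply fills in that asserted step (with $l=1$). One small correction to your justification: the strict monotonicity of the parameter along the local $(k+1)$th branch comes from the nonvanishing first Lyapunov coefficient, i.e.\ the strict inequality defining supercriticality in Theorem \ref{th:1}, not from the eigenvalue transversality $\alpha'(\mu_{k+1})\neq 0$, which by itself would not exclude a degenerate branch on which $\mu$ fails to be an injective coordinate.
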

\begin{proof}
If the $k$th Hopf bifurcation is subcritical, but the $(k+l)$th is supercritical, then for $\eta_1<\eta_2$ we have 
	$$(\mu_k-\eta_1)(lT^k_{\eta_1}+1)<(\mu_k-\eta_2)(lT^k_{\eta_2}+1).$$
	This is possible only if $T^k_{\eta_1}<T^k_{\eta_2}$.
\end{proof}

\section{Schwarzian-derivative and the direction of the Hopf bifurcation}
The Schwarzian derivative of a $C^3$ function $f$ is defined as
$$(Sf)(\xi)=\frac{f'''(\xi)}{f'(\xi)}-\frac{3}{2}\left(\frac{f''(\xi)}{f'(\xi)}\right)^2$$%\label{def:3}
at points $\xi$ where $f'(\xi)\neq 0$. This quantity plays an important role in many results regarding the global dynamics of difference equations, which can be extended to delay differential equations in various cases (see \cite{LizRost,LizRost2,LizRost3,Liz} and references thereof). A global stability conjecture was formulated in \cite{Liz}, stating that the zero solution of \eqref{eq:1} is globally asymptotically stable whenever it is locally asymptotically stable, if $Sf<0$ and some other technical conditions hold (for related conjectures, see \cite{LizRost}). An obvious way to disprove this global stability conjecture would be the following:  find a nonlinearity $f$ with $Sf<0$, where the Hopf bifurcation of \eqref{eq:1} is subcritical at $\mu_0$. This would provide a counterexample. Since both the directions of the bifurcation and the sign of the Schwarzian are determined by the derivatives of the nonlinearity up to order three, in view of the results of the previous sections, it is most natural to make a comparison to check whether such a counterexample is possible.

\begin{corollary}
If $Sf<0$, then all Hopf bifurcations are supercritical. Furthermore, if $f''(0)=0$, then for any $k$, the $k$th bifurcation is supercritical if and only if $Sf(0)<0$. 
\end{corollary}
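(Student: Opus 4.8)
The plan is to translate the hypothesis on the Schwarzian derivative into the coefficient inequality that already governs the direction of the bifurcation in Theorem~\ref{th:1} and Corollary~\ref{cor:2}. First I would compute the Schwarzian at the origin. Since the normalization gives $f'(0)=1$, $f''(0)=2B$ and $f'''(0)=6C$, we obtain
$$(Sf)(0)=\frac{f'''(0)}{f'(0)}-\frac{3}{2}\left(\frac{f''(0)}{f'(0)}\right)^2=6C-6B^2=6(C-B^2),$$
so that $(Sf)(0)<0$ is equivalent to $C<B^2$. This is the single identity on which everything rests.

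For the first assertion I would note that $Sf<0$ in particular forces $(Sf)(0)<0$, hence $C<B^2$. Because $\frac{22\pi-8}{15\pi}\approx 1.3>1$, we have $B^2\le\frac{22\pi-8}{15\pi}B^2$ for every real $B$ (and in the degenerate case $B=0$ this simply reads $C<0=\frac{22\pi-8}{15\pi}B^2$). Therefore $C<\frac{22\pi-8}{15\pi}B^2$, and Corollary~\ref{cor:2} immediately yields that every Hopf bifurcation of \eqref{eq:1} is supercritical. (Equivalently, one could invoke Theorem~\ref{th:1}(b) for each $k$, using that $H(k)\ge H(0)=\frac{22\pi-8}{15\pi}$ for all $k$.)

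For the second assertion I would set $f''(0)=0$, i.e. $B=0$. The identity above then specializes to $(Sf)(0)=6C$, so $(Sf)(0)<0\iff C<0$. On the other hand, substituting $B=0$ into the criterion of Theorem~\ref{th:1}(b) makes the threshold $\frac{22(4k+1)\pi-8}{15(4k+1)\pi}B^2$ vanish for every $k$, so the $k$th bifurcation is supercritical precisely when $C<0$ and subcritical precisely when $C>0$, independently of $k$. Chaining the two equivalences gives: the $k$th bifurcation is supercritical $\iff C<0\iff (Sf)(0)<0$, as claimed.

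The computation is entirely routine, so there is no genuine obstacle; the only points that require a moment's care are the numerical comparison $\frac{22\pi-8}{15\pi}>1$, which is what makes the Schwarzian condition strictly stronger than the supercriticality threshold of Corollary~\ref{cor:2}, and the degenerate case $B=0$, where the relevant inequalities must be read with $B^2=0$ so that they remain meaningful. In essence the corollary is just a dictionary between the sign of $(Sf)(0)$ and the position of $C/B^2$ relative to the thresholds $H(k)$ already established.
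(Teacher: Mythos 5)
Your proposal is correct and follows essentially the same route as the paper: compute $(Sf)(0)=6(C-B^2)$, deduce $C<B^2<\frac{22\pi-8}{15\pi}B^2$ (handling $B=0$ separately) and invoke Corollary~\ref{cor:2}, then specialize to $B=0$ where $(Sf)(0)=6C$ and the threshold in Theorem~\ref{th:1} vanishes. Your explicit remark that $\frac{22\pi-8}{15\pi}>1$ only makes precise a step the paper leaves implicit.
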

\begin{proof}
From the definition, it is easy to evaluate $Sf(0)=6(C-B^2)$, thus $Sf<0$ implies $Sf(0)<0$ and $C<B^2$. By Corollary 2, all Hopf bifurcations are supercritical. In the special case $f''(0)=0$, we have $B=0$ and $Sf(0)=6C$,
thus both the sign of the Schwarzian and the direction of the bifurcation are determined by the sign of $C$.  
\end{proof}
We found that it is not possible to construct a counterexample to the conjecture of Liz et al. by means of a subcritical Hopf bifurcation.

\section{Acknowledgements}

IB was supported by Hungarian Scientific Research Fund OTKA K109782 and EU-funded Hungarian grant EFOP-3.6.1-16-2016-00008. GR was supported by NKFIH FK124016 and Marie Skłodowska-Curie Grant No. 748193. The authors thank Maria Vittoria Barbarossa and Jan Sieber for helping with DDE-BifTool.

\end{document}